\newcommand{\closure}[2][3]{%
{}\mkern#1mu\overline{\mkern-#1mu#2}}
\numberwithin{equation}{section}
\DeclareSymbolFont{cyrletters}{OT2}{wncyr}{m}{n}
\DeclareMathSymbol{\Sha}{\mathalpha}{cyrletters}{"58}
\DeclareMathOperator{\Cl}{Cl}
\newcommand{\Z}{\mathbb{Z}}
\newcommand{\Q}{\mathbb{Q}}
\newcommand{\F}{\mathbb{F}}
\newcommand{\Kan}{K^{\operatorname{ac}}}
\newcommand{\op}[1]{\operatorname{#1}}
\newcommand\mtx[4] { \left( {\begin{array}{cc}
   #1 & #2 \\
   #3 & #4 \\
  \end{array} } \right)}
\newcommand{\DK}[1]{\textcolor{purple}{#1}}
\newcommand{\AR}[1]{\textcolor{red}{#1}}
\theoremstyle{plain}
 \theoremstyle{definition}
\newtheorem{Th}{Theorem}[section]
\newtheorem{Lemma}[Th]{Lemma}
\newtheorem{Example}[Th]{Example}
\newtheorem{hypothesis}[Th]{Hypothesis}
\newtheorem{Corollary}[Th]{Corollary}
\newtheorem{Proposition}[Th]{Proposition}
\newtheorem{Remark}[Th]{Remark}
 \theoremstyle{definition}
\newtheorem{Definition}[Th]{Definition}
\newtheorem{Fact}[Th]{Fact}
\begin{document}

\title{Anticyclotomic {\Large{$\mu$}}-invariants of residually reducible Galois Representations}
\author{Debanjana Kundu}
\address{Department of Mathematics \\ University of British Columbia \\
  Vancouver BC, V6T 1Z2, Canada.} \email{dkundu@math.ubc.ca}
\author{Anwesh Ray}
\address{Department of Mathematics \\ University of British Columbia \\
  Vancouver BC, V6T 1Z2, Canada.} 
  \email{anweshray@math.ubc.ca}
\begin{abstract} Let $E$ be an elliptic curve over an imaginary quadratic field $K$, and $p$ be an odd prime such that the residual representation $E[p]$ is reducible.
The $\mu$-invariant of the fine Selmer group of $E$ over the anticyclotomic $\Z_p$-extension of $K$ is studied.
We do not impose the Heegner hypothesis on $E$, thus allowing certain primes of bad reduction to decompose infinitely in the anticyclotomic $\Z_p$-extension.
It is shown that the fine $\mu$-invariant vanishes if certain explicit conditions are satisfied.
Further, a partial converse is proven.
\end{abstract}

\date{\today}

\maketitle
\section{Introduction}
Iwasawa theory is the study of objects of arithmetic interest over infinite towers of number fields.
K. Iwasawa conjectured that over the \textit{cyclotomic} $\Z_p$-extension of a number field $F$, the $p$-primary part of the Hilbert class group is a finitely generated $\Z_p$-module (see \cite{Iwa73}).
This is known as Iwasawa's $\mu=0$ conjecture for the number field $F$.
In \cite{FW79}, B. Ferrero and L. Washington proved this conjecture for all finite abelian extensions $F$ over of $\Q$.
The Iwasawa theory of abelian varieties (in particular, elliptic curves) was initiated by B. Mazur in \cite{Maz72}.
The main object of study is the $p$-primary Selmer group of an elliptic curve $E$ defined over a number field $F$, with good \textit{ordinary} reduction at $p$.
The Selmer group over a $\Z_p$-extension of $F$ is a cofinitely generated module over the Iwasawa algebra, $\Lambda$.
In \cite{Gre89}, R. Greenberg analyzed the algebraic structure of these Selmer groups.
For elliptic curves $E$ over $\Q$, it is conjectured that if the residual representation on $E[p]$ is irreducible, then $\mu$-invariant of the $p$-primary Selmer group over the cyclotomic $\Z_p$-extension, denoted $\mu(E/\Q^{\op{cyc}})$, vanishes (see \cite[Conjecture 1.11]{Gre99}). 
\par The \textit{fine} Selmer group is a subgroup of the classical Selmer group obtained by imposing vanishing conditions at primes above $p$.
It had first been studied by K. Rubin \cite{Rub14} and B. Perrin-Riou \cite{PR93, PR95}, under various guises.
The analysis of the fine Selmer groups is an essential part of K. Kato's seminal work on the Iwasawa Main Conjecture for elliptic curves and modular forms (see \cite{Kat04}).
In recent years, their study has gained considerable momentum (see for instance \cite{CS05, Wut05, Wut07, JS11, Ari14}).
J. Coates and R. Sujatha conjectured that the the fine Selmer group of $E$ over $F^{\op{cyc}}$ is $\Lambda$-cotorsion with associated $\mu$-invariant, $\mu^{\op{fine}}(E/F^{\op{cyc}})$, equal to zero \cite[Conjecture A]{CS05}.
The formulation of this conjecture makes no hypothesis on the reduction type at primes above $p$ or the residual representation on $E[p]$.
However, such a statement need not be true for the (classical) Selmer group even for elliptic curves over $\Q$ with good ordinary reduction at $p$.
In fact, Mazur provided examples of elliptic curves $E_{/\Q}$ for which the residual representation $E[p]$ is reducible and $\mu(E/\Q^{\op{cyc}})$ is non-zero.
There is a systematic approach towards finding examples of elliptic curves with positive $\mu(E/F^{\op{cyc}})$ (see \cite{Dri02, Dri03}).

On the other hand, the conjecture by Coates and Sujatha predicts a close relationship in the growth of ideal class groups and fine Selmer groups in cyclotomic $\Z_p$-extensions.
Some evidence towards this is provided in \cite[Theorem 3.4]{CS05}.
In particular, the result of Ferrero and Washington implies that for an elliptic curve over an abelian number field, $\mu^{\op{fine}}(E/F^{\op{cyc}})$ is zero when $E[p]$ is reducible (see \cite[Corollary 3.6]{CS05}).
Subsequently, the relation in the growth of ideal class groups and fine Selmer groups has been studied in more general settings (see \cite{LM15, Kun20_infinite, Kun20_p, Kun20_uniform}).
In this paper, we study the relationship in the growth of fine Selmer groups and class groups in \textit{anticyclotomic} $\Z_p$-extensions of imaginary quadratic fields.

Several authors have studied classical Selmer groups of elliptic curves (more generally, abelian varieties or modular forms) in anticyclotomic $\Z_p$-extensions of imaginary quadratic fields (see for example \cite{Vat03, BD05, PW11}).
In line with the conjecture of Coates and Sujatha, for an elliptic curve defined over the imaginary quadratic field $K$, its fine Selmer group over the anticyclotomic $\Z_p$-extension is expected to be $\Lambda$-cotorsion with $\mu^{\op{fine}}(E/\Kan)=0$ (see \cite[Conjecture B]{Mat18}).
Most results in literature focus on the case when the residual representation is irreducible.
In contrast, this paper primarily studies the case when the residual representation is \textit{reducible}.
The first main result (see Theorem~\ref{theorem1}) shows that if certain conditions are satisfied then the $\mu$-invariant of the $p$-primary fine Selmer group may be detected by a certain analogous fine Selmer group associated to the residual representation, $E[p]$.
Further, if $E[p]$ is reducible then there are characters \[\varphi_1,\varphi_2:\op{Gal}(\closure{K}/K)\rightarrow \op{GL}_1(\F_p)\] which fit into a short exact sequence \[0\rightarrow \F_p(\varphi_1)\rightarrow E[p]\rightarrow \F_p(\varphi_2)\rightarrow 0.\] Note that $\varphi_2=\overline{\chi}\varphi_1^{-1}$, where $\overline{\chi}$ denotes the mod-$p$ cyclotomic character.
This short exact sequence makes it possible to analyze the structure of the fine Selmer group associated to $E[p]$.
Next, we prove a partial converse to the above theorem (see Theorem \ref{theorem2}).
A key difference between the cyclotomic and anticyclotomic $\Z_p$-extensions is the following: in the cyclotomic extension, all primes are finitely decomposed, whereas in the anticyclotomic extension, there are infinitely many primes which split completely.
For the aforementioned result of Coates and Sujatha in the cyclotomic extension case, this fact is crucially used (in the proofs of \cite[Lemmas 3.2, 3.7, Theorem 3.4]{CS05}).
The elliptic curve $E$ is said to satisfy the Heegner hypothesis if all primes $v\nmid p$ of $K$ at which it has bad reduction are split completely in $K$.
By Fact $\ref{fact1}$, such a a prime number is finitely decomposed in the anticyclotomic $\Z_p$-extension, $\Kan$.
We work in a more general setting, where primes of bad reduction of $E$ may be infinitely decomposed in $\Kan$, and thereby rely on a different strategy (from that of Coates and Sujatha) to prove our results.

\par Two elliptic curves, $E_1$ and $E_2$, are said to be $p$-congruent if the $p$-torsion subgroups $E_1[p]$ and $E_2[p]$ are isomorphic as Galois modules.
In \cite{GV00}, R. Greenberg and V. Vatsal studied the relation between cyclotomic invariants of elliptic curves over $\Q$ which are $p$-congruent.
In \cite[Proposition 4.1.6]{Gre11}, Greenberg reformulated the conjecture of Coates and Sujatha in terms of the vanishing of a second cohomology group with coefficients in $E[p]$.
From this, it is immediate that if elliptic curves $E_1$, $E_2$ are $p$-congruent then $\mu^{\op{fine}}(E_1/\Q^{\op{cyc}})=0$ if and only if $\mu^{\op{fine}}(E_2/\Q^{\op{cyc}})=0$.
In the same spirit, we prove a result for fine $\mu$-invariants over $\Kan$ (see Theorem \ref{theorem3}), without imposing any hypothesis on the reducibility of the residual representations.
\par The authors expect that the methods in this paper should generalize to residually reducible Galois representations arising from abelian varieties.
The results shall however be more technical and the arguments more cumbersome in the higher dimensional setting.
The authors choose a less general framework in which the inherent simplicity of the underlying ideas come across easily.

\par The paper is organized into six sections.
Preliminary notions are discussed in \S\ref{Preliminaries}.
In \S\ref{A Criterion for Vanishing} and \S\ref{finiteness of the residual fine Selmer group}, we state and prove the main results: we establish a criterion for the vanishing of the $\mu$-invariant of the fine Selmer group in the anticyclotomic $\Z_p$-extension.
We also prove a partial converse to the above theorem.
In \S\ref{Congruent Galois representations}, we compare the anticyclotomic $\mu$-invariant for two elliptic curves which are $p$-congruent.
Finally, in \S\ref{examples} we list examples illustrating the results in this article.

\section{Preliminaries}
\label{Preliminaries}
\par Throughout, let $p$ be an odd prime and $K$ be an imaginary quadratic field.
Let $S_p$ be the set of primes of $K$ above $p$.
Fix an elliptic curve $E$ over $K$.
Set $\op{G}_{K}:= \op{Gal}(\closure{K}/K)$ to be the absolute Galois group of $K$.
Denote by $E[p^n]$ (resp. $E[p^{\infty}]$) the $p^n$-torsion (resp. $p$-primary torsion) subgroup of $E(\closure{K})$.
The Tate module $\op{T}_p(E)$ is the inverse limit with respect to multiplication by $p$ maps, \[\op{T}_p(E):=\varprojlim_n E[p^n].\]
The $\Z_p$-module $\op{T}_p(E)$ is free of rank $2$ and the group of $\Z_p$-linear automorphisms of $\op{T}_p(E)$ is identified with $\op{GL}_2(\Z_p)$.
The action of $\op{G}_{K}$ on $\op{T}_p(E)$ induces a continuous Galois representation, $\rho_E:\op{G}_{K}\rightarrow \op{GL}_2(\Z_p)$.
Set $\closure{\rho}_E$ to denote the mod-$p$ reduction of $\rho_E$, as depicted
 \[ \begin{tikzpicture}[node distance = 2.5 cm, auto]
            \node at (0,0) (G) {$\op{G}_{K}$};
             \node (A) at (3,0){$\op{GL}_2(\F_p)$.};
             \node (B) at (3,2){$\op{GL}_2(\Z_p)$};
      \draw[->] (G) to node [swap]{$\closure{\rho}_E$} (A);
       \draw[->] (B) to node{} (A);
      \draw[->] (G) to node {$\rho_E$} (B);
\end{tikzpicture}\]
The residual representation $\closure{\rho}_E$ is induced by the action of $\op{G}_{K}$ on $E[p]$.
Let $\mathcal{N}$ be the conductor of $E$ and $S$ the set of primes that divide $\mathcal{N}p$.
It is known that $\rho_E$ is unramified at all primes $v\notin S$.
Let $K_S$ denote the maximal algebraic extension of $K$ in which all primes $v\notin S$ are unramified and set $\op{G}_{K,S}:=\op{Gal}(K_S/K)$.
Throughout this paper, we primarily focus on the case when the Galois representation $\rho_E$ is residually reducible.
However, some of the results will apply to the residually irreducible case as well.
We shall be careful to make precise when the following hypothesis is required.
 \begin{hypothesis}
Assume that the residual representation $\closure{\rho}_E$ is reducible, i.e. $E[p]$ contains a proper non-zero $\op{G}_{K}$-stable submodule.
\end{hypothesis}
This is indeed the case when $E(K)[p]\neq 0$. 
Though the condition is in fact far more general.
In this setting, there are characters \[\varphi_1, \ \varphi_2:\op{G}_{K,S}\rightarrow \op{GL}_1(\F_p)\] and a $1$-cocycle \[\beta:\op{G}_{K,S}\rightarrow \F_p(\varphi_1\varphi_2^{-1})\] such that
\[\closure{\rho}_E\simeq \mtx{\varphi_1}{\varphi_2 \beta}{}{\varphi_2}.\]
The residual representation $\closure{\rho}_E$ is said to be \textit{indecomposable} if the cohomology class $[\beta]\in H^1(\op{G}_{K,S}, \F_p(\varphi_1\varphi_2^{-1}))$ is non-zero, and \textit{split} otherwise.
If $\closure{\rho}_E$ is split, then $\closure{\rho}_E\simeq \mtx{\varphi_1}{}{}{\varphi_2}$, and the characters $\varphi_1$ and $\varphi_2$ may be interchanged.
If $\closure{\rho}_E$ is indecomposable, then $\varphi_1$ is the unique character such that $E[p]$ contains a Galois submodule isomorphic to $\F_p(\varphi_1)$.
\begin{Remark}If $E_{/\Q}$ is an elliptic curve such that $E[p]$ is reducible as a $\op{Gal}(\closure{\Q}/\Q)$-module then $E$ has either good ordinary reduction or bad reduction at $p$.
By a result of Fontaine, if $E_{/\Q}$ is an elliptic curve with good supersingular reduction at $p$, then $\closure{\rho}_{E\restriction \op{G}_{\Q_p}}$ is irreducible.\end{Remark}

\par Suppose $\mathcal{K}$ is a $\Z_p$-extension of $K$ which is Galois over $\Q$.
Then, there are exactly two cases to consider: either $\mathcal{K}$ is the cyclotomic $\Z_p$-extension or the anticyclotomic $\Z_p$-extension.
Note that $\Gamma_{\mathcal{K}}:=\op{Gal}(\mathcal{K}/K)$ is an index two normal subgroup of $\op{Gal}(\mathcal{K}/\Q)$.
The group $\op{Gal}(K/\Q)$ acts on $\Gamma_{\mathcal{K}}$, as is explained.
Let $\tau\in \op{Gal}(K/\Q)$ and $x\in \Gamma_{\mathcal{K}}$; choose a lift $\tilde{\tau}\in \op{Gal}(\mathcal{K}/\Q)$ of $\tau$ and set $\tau \cdot x:=\tilde{\tau} x\tilde{\tau}^{-1}$.
Since $\Gamma_{\mathcal{K}}$ is abelian, $\tau\cdot x$ does not depend on the choice of the lift $\tilde{\tau}$.
If the action of $\op{Gal}(K/\Q)$ on $\Gamma_{\mathcal{K}}$ is via the trivial (resp. non-trivial) character, then $\mathcal{K}$ is the cyclotomic (resp. anticyclotomic) extension of $K$.
Thus, the cyclotomic extension is pro-cyclic and the anticyclotomic extension is pro-dihedral.
We only consider the anticyclotomic extension in this article, and denote it by $\Kan$.
Set $\Gamma$ to denote $\op{Gal}(\Kan/K)$.
For $n\geq 0$, the \textit{$n$-th layer} is the unique number field $K_n$ such that $K\subseteq K_n \subset \Kan$ and $[K_n:K]=p^n$.
Note that $K_n$ is Galois over $\Q$ and its Galois group $\op{Gal}(K_n/\Q)$ is (isomorphic to) the dihedral group of order $2p^n$.
\par For a set of primes $S'$, set $S'(\Kan)$ (resp. $S'(K_n)$) to denote the primes of $\Kan$ (resp. $K_n$) which lie above some prime $v\in S'$.
For instance, $v(\Kan)$ (resp. $v(K_n)$) will denote the primes $\eta|v$ of $\Kan$ (resp. of $K_n$).
The set of primes above a given prime of $K$ in the cyclotomic $\Z_p$-extension $K^{\op{cyc}}$ is finite.
This is not the case for $\Kan$; the following characterization is well known. 
\begin{Fact}\label{fact1}Let $v$ be a prime of $K$ and $l$ the prime number such that $v|l$.
The set of primes $v(\Kan)$ is finite if and only if $l=p$ or $l$ splits in $K$.
\end{Fact}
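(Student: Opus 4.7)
The plan is to reformulate the question as follows: the number of primes of $\Kan$ above $v$ equals the index $[\Gamma:D_v]$, where $D_v\subseteq\Gamma$ is the decomposition group at $v$. Since $\Gamma\cong\Z_p$ and every nonzero closed subgroup of $\Z_p$ is open, finiteness of $v(\Kan)$ is equivalent to $D_v\neq 0$. I would then split into three cases, exploiting the defining property that complex conjugation $c\in\op{Gal}(K/\Q)$ acts on $\Gamma$ by inversion.

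For $l=p$, the extension $\Kan/K$ is unramified outside $p$ and, since $K$ admits no unramified $\Z_p$-extension (the class number being finite), must carry nontrivial inertia at some prime above $p$. If $p$ is inert or ramified in $K$, this is the unique prime above $p$, so $D_v\supseteq I_v\neq 0$. If $p=\mathfrak p\bar{\mathfrak p}$ splits in $K$, then $D_{\bar{\mathfrak p}}=cD_{\mathfrak p}c^{-1}=D_{\mathfrak p}$ (since inversion preserves any closed subgroup of $\Gamma$), so both decomposition groups are nontrivial as soon as one is. When $l\neq p$ and $l$ is inert or ramified in $K$, the unique prime $v$ above $l$ is fixed by $c$, so $\sigma_v=c\sigma_v c^{-1}=\sigma_v^{-1}$; hence $\sigma_v^2=1$, and since $\Gamma\cong\Z_p$ with $p$ odd is torsion-free, $\sigma_v=1$ and $v(\Kan)$ is infinite.

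The main obstacle is the case $l\neq p$ with $l=\mathfrak p\bar{\mathfrak p}$ split in $K$, where I need the positive assertion $\sigma_{\mathfrak p}\neq 0$. Here the plan is to invoke global class field theory via the ring class field tower: the Artin map identifies $\sigma_{\mathfrak p}$ with the image of $[\mathfrak p]$ in the pro-$p$ quotient of the ring class group $\op{Pic}(\mathcal O_{p^n})$ of conductor $p^n$. Using the standard exact sequence
\[
1\to(\mathcal O_K/p^n)^*/\bigl((\Z/p^n)^*\cdot\mathcal O_K^*\bigr)\to\op{Pic}(\mathcal O_{p^n})\to\op{Pic}(\mathcal O_K)\to 1,
\]
triviality of $\sigma_{\mathfrak p}$ at every layer would force, after adjusting by a power involving the class number, a generator of some power of $\mathfrak p$ to lie in $\Z_p^*\cdot\mathcal O_K^*$ inside $\mathcal O_K\otimes\Z_p$. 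I would then extract a contradiction from the fact that $\mathfrak p\neq\bar{\mathfrak p}$ are distinct primes above the same rational prime $l$: the anticyclotomic direction of $\Gamma$ is precisely what detects this distinction, whereas the cyclotomic direction (which is not involved here) does not.
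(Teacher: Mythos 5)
The paper does not prove this fact from scratch; it simply cites Brink's article \cite{Bri07} for each of the three assertions (eventual total ramification above $p$, complete splitting of non-split $l\neq p$, and finite decomposition of split $l\neq p$). Your proposal is therefore a genuinely different route: a self-contained argument built on the observation that complex conjugation acts on $\Gamma$ by inversion. Your treatment of the first two cases is correct and clean. For $l=p$: the class number argument gives a nontrivial (in fact open) inertia subgroup at some prime above $p$, and the inversion action shows the inertia groups at the (one or two) primes above $p$ coincide, so all decomposition groups there are open. For $l\neq p$ inert or ramified: the unique prime above $l$ has Frobenius $\sigma_v$ with $\sigma_v=\sigma_v^{-1}$, hence $\sigma_v=1$ since $\Gamma$ is torsion-free. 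These match the structure of Brink's own arguments, which you have essentially rederived.

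The real content, however, is the split case $l\neq p$, and here your proof is an avowed sketch with a genuine gap. Two things need to be filled in. First, $\sigma_{\mathfrak p}=0$ in $\Gamma$ is a statement in the $\Z_p$-free quotient of the pro-$p$ part of $\varprojlim_n\op{Pic}(\mathcal O_{p^n})$; passing from this to the vanishing of $[\mathfrak p]$ in the groups $\op{Pic}(\mathcal O_{p^n})$ themselves requires controlling the (finite) torsion in that inverse limit, e.g. by replacing $\mathfrak p$ with a suitable power $\mathfrak p^{hm}$. Second, and more importantly, the claimed contradiction at the end is not stated. The correct endgame (which does work) is: if a generator $\alpha$ of $\mathfrak p^{h}$ lies in the closure of $\Z_p^*\cdot\mathcal O_K^*$ inside $(\mathcal O_K\otimes\Z_p)^*$, then $\alpha/\bar\alpha$ is a root of unity in $K$ (whether $p$ is split, inert or ramified, the image of $\Z_p^*$ is fixed by the conjugation on $\mathcal O_K\otimes\Z_p$, so $\alpha$ and $\bar\alpha$ agree up to a unit of finite order). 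That forces $(\alpha)=(\bar\alpha)$, i.e.\ $\mathfrak p^{h}=\bar{\mathfrak p}^{h}$, contradicting $\mathfrak p\neq\bar{\mathfrak p}$. Your phrase ``the anticyclotomic direction of $\Gamma$ is precisely what detects this distinction'' gestures at this but does not establish it. Until these two steps are supplied, the split case---which is the only nontrivial direction of the Fact---remains unproved in your write-up.
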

Indeed, there is a large enough value of $n$ such that all primes of $K_n$ above $p$ are totally ramified in $\Kan$, see the last three lines of \cite[p. 2131]{Bri07}.
A prime $l\neq p$ which does not split in $K$ must split completely in $\Kan$, see the first paragraph of p. 2132 of \textit{loc. cit.}
Prime numbers $l$ which split in $K$ must be finitely decomposed in $\Kan$, see for example, Corollary 1 of \textit{loc. cit.}

Recall that $\mathcal{N}$ is the conductor of $E$.
Denote by $\closure{\mathcal{N}}$ the Artin conductor of $\closure{\rho}_{E}$.
Note that $\closure{\mathcal{N}}$ divides $\mathcal{N}$, denote by $(\mathcal{N}/\closure{\mathcal{N}})$ the quotient.
We make an assumption on a certain subset of primes in $S$ which is described below.
\begin{Definition}
Denote by $\Sigma\subset S$ the set of primes $v\nmid  p$ at which \textit{all} of the following conditions are satisfied.
\begin{enumerate}
\item $v|(\mathcal{N}/\closure{\mathcal{N}})$.
\item If $p\geq 5$ and $\mu_p \subset K_v$, then $E$ has split multiplicative reduction at $v$.
\item If $p=3$ and $\mu_3\subset K_v$, then $E$ has split multiplicative reduction or additive reduction at $v$.
\end{enumerate}

\end{Definition}
\begin{hypothesis}\label{splithyp}
Let $E$ be an elliptic curve over an imaginary quadratic field $K$.
Let $v\in \Sigma$ and $l$ be the prime number such that $v|l$.
Then $l$ is split in $K$.
\end{hypothesis}

\begin{Definition}
Suppose that $\closure{\rho}_E$ is reducible and indecomposable.
Let $\Sigma(\varphi_2)$ be the set of primes $v\in S\setminus S_p$ such that $\varphi_{2\restriction \op{G}_{\Q_v}}=1$.
\end{Definition}

\begin{hypothesis}\label{splithyp2}
Suppose that $\closure{\rho}_E$ is reducible and indecomposable.
Let $v\in \Sigma(\varphi_2)$ and $l$ be the prime number in such that $v|l$.
Then $l$ is split in $K$. 
\end{hypothesis}

\begin{Remark}
The above hypotheses are all weaker than the Heegner hypothesis.
Some of our results will require the above hypotheses; they will be assumed only when explicitly stated.
For instance, Theorem $\ref{theorem1}$ will require Hypothesis $\ref{splithyp}$ (and Hypothesis $\ref{splithyp2}$ when $\closure{\rho}_E$ is indecomposable).
On the other hand, Theorem $\ref{theorem2}$ does not require these hypotheses.
\end{Remark}
We now introduce the fine Selmer group.
At each prime $v\in S$, set \[\mathcal{H}_v(\Kan, E[p^{\infty}]) :=\prod_{\eta\in v(\Kan)} H^1(\Kan_{\eta}, E[p^{\infty}]).\]

\begin{Definition}The \textit{fine Selmer group} associated to $E[p^{\infty}]$ is defined as follows
\[\mathcal{R}_{p^{\infty}}(E/\Kan) := \op{ker}\left\{ H^1(K_S/\Kan, E[p^{\infty}])\longrightarrow \bigoplus_{v\in S} \mathcal{H}_v( \Kan, E[p^{\infty}]) \right\}.\]
\end{Definition}

Recall that $\Gamma:=\op{Gal}(\Kan/K)\simeq \Z_p$.
The Iwasawa algebra $\Lambda$ is the completed group algebra $\Z_p\llbracket \Gamma \rrbracket :=\varprojlim_n \Z_p[\Gamma/\Gamma^{p^n}]$.
After fixing a topological generator $\gamma$ of $\Gamma$, there is an isomorphism of rings $\Lambda\cong\Z_p\llbracket X\rrbracket $, by sending $\gamma -1$ to the formal variable $X$.
The fine Selmer group is a cofinitely generated $\Lambda$-module.

The Pontryagin dual $\mathcal{R}_{p^{\infty}}(E/\Kan)^{\vee}$ is, up to pseudo-isomorphism, a finite direct sum of cyclic $\Lambda$-modules:
\[
\mathcal{R}_{p^{\infty}}(E/\Kan)^{\vee}\sim \Lambda^r\oplus \left(\bigoplus_{i=1}^s \Lambda/(p^{\mu_i})\right)\oplus \left(\bigoplus_{j=1}^t \Lambda/(f_j(T)) \right).
\]
Here, $\mu_i>0$ and $f_j(T)$ is a distinguished polynomial (i.e. a monic polynomial with non-leading coefficients divisible by $p$).
Here, $r$ is the $\Lambda$-corank of the fine Selmer group.
The $\mu$-invariant of the fine Selmer group is defined as follows,
\[
\mu^{\op{fine}}(E/\Kan):=\begin{cases}0 & \textrm{ if } s=0\\
\sum_{i=1}^s \mu_i & \textrm{ if } s>0.
\end{cases}
\]
The number of summands $s$ is a well-defined invariant and we refer to it as the \textit{$\mu$-multiplicity}. 
We now introduce the fine Selmer group associated to the residual representation on $E[p]$.
At each prime $v$ of $K$, set \[\mathcal{H}_v(\Kan, E[p]):=\prod_{\eta\in v(\Kan)} H^1(\Kan_{\eta}, E[p]).\]

\begin{Definition}\label{Tfine}Let $T$ be a finite set of primes containing $S$. The fine Selmer group associated to $E[p]$ and the set of primes $T$ is defined as follows
\[\mathcal{R}^T(E[p]/\Kan) := \op{ker}\left\{ H^1(K_T/\Kan, E[p])\longrightarrow \bigoplus_{v\in T} \mathcal{H}_v( \Kan, E[p]) \right\}.\]
Set $\mathcal{R}(E[p]/\Kan) = \mathcal{R}^S(E[p]/\Kan)$; this is the mod-$p$ fine Selmer group.
\end{Definition}

Set $\Omega$ to denote the mod-$p$ Iwasawa algebra, $\Omega:=\Lambda/(p)\simeq \F_p\llbracket X \rrbracket$.
Note that both $\mathcal{R}(E[p]/\Kan)$ and $\mathcal{R}_{p^{\infty}}(E/\Kan)[p]$ are $\Omega$-modules.
Since $\mathcal{R}_{p^{\infty}}(E/\Kan)$ is cofinitely generated over $\Lambda$, it follows that $\mathcal{R}_{p^{\infty}}(E/\Kan)[p]$ is cofinitely generated over $\Omega$.
The following is an easy consequence of the structure theory of $\Lambda$-modules.

\begin{Lemma}\label{lemma27}
The $\Omega$-corank of $\mathcal{R}_{p^{\infty}}(E/\Kan)[p]$ is equal to $r+s$, where $r$ and $s$ are defined above.
In particular,
\[\mathcal{R}_{p^{\infty}}(E/\Kan) \text{ is }\Lambda\text{-cotorsion with }\mu^{\op{fine}}(E/\Kan)=0\Leftrightarrow \mathcal{R}_{p^{\infty}}(E/\Kan)[p]\text{ is finite.}\]
\end{Lemma}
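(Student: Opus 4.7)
The plan is to reduce everything to a calculation on the elementary $\Lambda$-module that is pseudo-isomorphic to $M := \mathcal{R}_{p^{\infty}}(E/\Kan)^{\vee}$, using Pontryagin duality to convert the cotorsion/corank statement about $\mathcal{R}_{p^{\infty}}(E/\Kan)[p]$ into a statement about $M/pM$.

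First I would identify the dual. Dualizing the exact sequence $0 \to \mathcal{R}_{p^{\infty}}(E/\Kan)[p] \to \mathcal{R}_{p^{\infty}}(E/\Kan) \xrightarrow{p} \mathcal{R}_{p^{\infty}}(E/\Kan)$ gives that $\mathcal{R}_{p^{\infty}}(E/\Kan)[p]^{\vee} \cong M/pM$ as $\Omega$-modules. Hence the $\Omega$-corank of $\mathcal{R}_{p^{\infty}}(E/\Kan)[p]$ equals the $\Omega$-rank of $M/pM$, and finiteness of the former is equivalent to finiteness of the latter.

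Next I would carry out the rank computation on the elementary module $M' := \Lambda^r \oplus \bigoplus_{i=1}^s \Lambda/(p^{\mu_i}) \oplus \bigoplus_{j=1}^t \Lambda/(f_j(T))$. Reducing each summand modulo $p$ is straightforward: $\Lambda^r/p\Lambda^r \cong \Omega^r$ contributes rank $r$; each $\Lambda/(p^{\mu_i})$ reduces to $\Omega$ (since $\mu_i \geq 1$), contributing $s$ in total; and each $\Lambda/(f_j(T))$ reduces to $\Omega/(\bar f_j)$, which is finite because $f_j$ is distinguished and hence $\bar f_j = T^{\deg f_j}$. So the $\Omega$-rank of $M'/pM'$ is exactly $r+s$. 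To transfer this to $M/pM$, I would invoke the pseudo-isomorphism $M \sim M'$: it fits into short exact sequences whose kernels and cokernels are finite, and applying $- \otimes_{\Lambda} \Omega$ with the associated Tor sequence shows that $M/pM$ and $M'/pM'$ differ only by finite $\Omega$-modules, hence have the same $\Omega$-rank.

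Finally, for the equivalence, I would observe that $\Omega = \F_p\llbracket X\rrbracket$ is a discrete valuation ring, so any finitely generated $\Omega$-module is finite if and only if it has $\Omega$-rank zero. Thus $M/pM$ is finite if and only if $r+s=0$, which is equivalent to $r=0$ (i.e.\ $\mathcal{R}_{p^{\infty}}(E/\Kan)$ being $\Lambda$-cotorsion) together with $s=0$ (i.e.\ $\mu^{\op{fine}}(E/\Kan)=0$ by definition). The only subtle point, and the one I would state most carefully, is the preservation of $\Omega$-rank under pseudo-isomorphism after reduction mod $p$; this is routine but requires explicit mention of the Tor term since $-\otimes_\Lambda \Omega$ is not exact.
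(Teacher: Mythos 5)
Your proposal is correct and follows essentially the same route as the paper: dualize to identify $\mathcal{R}_{p^{\infty}}(E/\Kan)[p]^{\vee}$ with $M/pM$, reduce the elementary module mod $p$ so that the $\Lambda/(p^{\mu_i})$ summands become copies of $\Omega$ and the $\Lambda/(f_j)$ summands become finite (because $\bar f_j = T^{\deg f_j}$), and conclude. The one place you are a bit more scrupulous than the paper is in noting that reduction mod $p$ of a pseudo-isomorphism requires a Tor argument to preserve $\Omega$-rank; the paper's proof glosses over this but the underlying computation is identical.
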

\begin{proof}
Note that $\left(\mathcal{R}_{p^{\infty}}(E/\Kan)[p]\right)^{\vee}$ is isomorphic to $\mathcal{R}_{p^{\infty}}(E/\Kan)^{\vee}/p$.
Let $\Phi$ be a pseudo-isomorphism of $\Lambda$-modules, i.e. a homomorphism, \[\Phi:\mathcal{R}_{p^{\infty}}(E/\Kan)^{\vee}\rightarrow \Lambda^r\oplus \left(\bigoplus_{i=1}^s \Lambda/(p^{\mu_i}) \right)\oplus \left(\bigoplus_{j=1}^t \Lambda/(f_j(T)) \right)\] with finite kernel and cokernel.
The mod-$p$ reduction $\overline{\Phi}$ is the following map
\[\closure{\Phi}:\left(\mathcal{R}_{p^{\infty}}(E/\Kan)[p]\right)^{\vee}\rightarrow \Omega^{r+s}\oplus \left(\bigoplus_{j=1}^t \Omega/(\closure{f}_j(T)) \right),\]where $\closure{f}_j(T)$ is the mod-$p$ reduction of $f_j(T)$.
Since $f_j(T)$ is a distinguished polynomial, $\closure{f}_j(T)=T^{\op{deg}f_j}$ and $\Omega/(\closure{f}_j(T))$ is finite.
The result follows.
\end{proof}

\section{A Criterion for the Vanishing of the fine {\large $\mu$}-invariant}
\label{A Criterion for Vanishing}
In this section, we establish a criterion for the vanishing of the fine $\mu$-invariant in the anticyclotomic $\Z_p$-extension.
Before stating the result, let us introduce some notation.
We shall not assume that $\closure{\rho}_E$ is reducible in this section, unless when explicitly stated.
However, when it is reducible, recall that the characters $\varphi_1$ and $\varphi_2$ are such that $E[p]$ fits into a short exact sequence of Galois-modules,
\[0\rightarrow \F_p(\varphi_1)\rightarrow E[p]\rightarrow \F_p(\varphi_2)\rightarrow 0.\]
Let $K(\varphi_i)$ be an extension of $K$ fixed by $\ker \varphi_i$.
For an algebraic extension $\mathcal{F}$ of $K$, let $\mathcal{F}(\varphi_i)$ be the composite $\mathcal{F}\cdot K(\varphi_i)$.
Let $\mathcal{L}_n^{(i)}$ be the maximal abelian unramified $p$-extension of $K_n(\varphi_i)$ split at primes above $S(K_n(\varphi_i))$ (called the $p$-Hilbert $S$-class field extension).
Class field theory prescribes a natural isomorphism 
\[\op{Gal}(\mathcal{L}_n^{(i)}/K_n(\varphi_i))\simeq \op{Cl}_S(K_n(\varphi_i))[p^{\infty}],\]where $\op{Cl}_S(K_n(\varphi_i))[p^{\infty}]$ is the $p$-primary part of the $S$-class group of $K_n(\varphi_i)$.

Let $\rm{M}$ be any $\Z_p$-module on which $\op{Gal}(K(\varphi_i)/K)$ acts by $\Z_p$-linear automorphisms.
Let $\psi:\op{Gal}(K(\varphi_i)/K)\rightarrow \F_p^{\times}$ be a character, and consider its Teichm\"uller lift, $\tilde{\psi}:\op{Gal}(K(\varphi_i)/K)\rightarrow \Z_p^{\times}$.
Set
\[\textrm{M}_{\psi}:=\{ x\in \textrm{M}| g\cdot x=\tilde{\psi}(g)x\}.\]
There is a unique field extension $\mathcal{E}_n^{(i)}$ of $K_n(\varphi_i)$ which is Galois over $K$ with the additional property that $\op{Gal}(\mathcal{E}_n^{(i)}/K_n(\varphi_i))$ is identified with $\left(\op{Cl}_S(K_n(\varphi_i))[p^{\infty}]\right)_{\varphi_i}$.
Set $\mathcal{L}^{(i)}:=\bigcup_{n} \mathcal{L}^{(i)}_n$ and $\mathcal{E}^{(i)}:=\bigcup_{n} \mathcal{E}^{(i)}_n$.
Further, setting $\mathcal{X}^{(i)}=\op{Gal}(\mathcal{L}^{(i)}/\Kan(\varphi_i))$, 
standard arguments show that $\mathcal{X}^{(i)}$ is a finitely generated torsion $\Lambda$-module. 
Note that $\mathcal{X}^{(i)}$ decomposes into a direct sum of $\Lambda$-submodules
\[\mathcal{X}^{(i)}=\bigoplus_{\psi} \mathcal{X}^{(i)}_{\psi},\]where $\psi$ ranges over the characters $\op{Gal}(K(\varphi_i)/K)\rightarrow \F_p^{\times}$.
Recall that the $\psi$-eigenspace $\mathcal{X}^{(i)}_{\psi}$ is defined as follows
\[\mathcal{X}^{(i)}_{\psi}:=\{x\in \mathcal{X}^{(i)}\mid g \cdot x= \tilde{\psi}(g) x\}.\]
Let $Y_i$ be the $\varphi_i$-eigenspace $\mathcal{X}^{(i)}_{\varphi_i}$ and $\widetilde{Y}_i$ be the $\varphi_i$-component of the $p$-Hilbert class field extension $\mathcal{M}^{(i)}$ of $\Kan(\varphi_i)$.
Denote their $\mu$-invariants (with respect to the $\Z_p$-extension $\Kan(\varphi)/K(\varphi_i)$) by $\mu(Y_i)$ and $\mu(\widetilde{Y_i})$, respectively.
Note that $Y_i$ is the quotient of $\widetilde{Y}_i$ where all the primes in $S(\Kan(\varphi_i))$ are split.
We identify $Y_i$ with the Galois group $\op{Gal}(\mathcal{E}^{(i)}/\Kan)$.
The next result follows from the work of H. Hida \cite[Theorem I]{Hid10} (see also \cite[Theorem 1.1]{Fin06}) and Rubin \cite[Theorem 4.1]{Rub91}.
\begin{Th}\label{hidarubin}
Suppose that $p = v \overline{v}$ in $K$.
Let $\op{G}_{v}=\op{Gal}(\closure{K_{v}}/K_v)$ and assume that $\varphi_{1\restriction \op{G}_{v}}\neq 1, \ \overline{\chi}_{\restriction \op{G}_{v}}$ (or equivalently, $\varphi_{2\restriction \op{G}_{v}}\neq 1, \ \overline{\chi}_{\restriction \op{G}_{v}}$).
Then, 
\[\mu(\widetilde{Y}_1)=\mu(\widetilde{Y}_2)=\mu(Y_1)=\mu(Y_2)=0.\]
\end{Th}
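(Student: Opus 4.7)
The plan is to deduce Theorem~\ref{hidarubin} directly from the cited work of Hida, Finis and Rubin after two simple reductions. First I would observe that the $p$-Hilbert $S$-class field $\mathcal{L}^{(i)}$ is contained in the full $p$-Hilbert class field $\mathcal{M}^{(i)}$ of $\Kan(\varphi_i)$, which yields a $\Lambda$-equivariant surjection $\widetilde{Y}_i\twoheadrightarrow Y_i$ after passing to $\varphi_i$-eigenspaces. Hence $\mu(Y_i)\leq \mu(\widetilde{Y}_i)$, and it suffices to prove the vanishing statements for $\widetilde{Y}_1$ and $\widetilde{Y}_2$.

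Second, I would use the determinant relation $\det\closure{\rho}_E=\overline{\chi}$, which forces $\varphi_1\varphi_2=\overline{\chi}$ as characters of $\op{G}_{K,S}$. Restricting to $\op{G}_{v}$ gives the identity $\varphi_{2\restriction \op{G}_v}=\overline{\chi}_{\restriction \op{G}_v}\cdot\varphi_{1\restriction \op{G}_v}^{-1}$, so the hypothesis $\varphi_{1\restriction \op{G}_v}\neq 1,\overline{\chi}_{\restriction \op{G}_v}$ is equivalent to the analogous statement for $\varphi_2$. Thus it is enough to treat $\widetilde{Y}_1$; the argument for $\widetilde{Y}_2$ is identical after swapping the two characters.

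With these reductions in hand, I would apply Theorem~I of Hida~\cite{Hid10} (in the form reproved in Theorem~1.1 of Finis~\cite{Fin06}) to the branch character $\varphi_1$ viewed as a finite-order character of $\op{Gal}(K(\varphi_1)/K)$. This theorem asserts that the anticyclotomic $\mu$-invariant of the $\varphi_1$-branch of the class group tower of the imaginary quadratic field $K$ vanishes, provided that $p$ splits in $K$ as $p=v\overline{v}$ and that $\varphi_{1}$ is neither trivial nor equal to $\overline{\chi}$ when restricted to the decomposition groups at both $v$ and $\overline{v}$. Since $p$ splits and complex conjugation exchanges $v$ and $\overline{v}$, the hypothesis at $v$ transfers to $\overline{v}$ for the global character $\varphi_1$; Hida's conditions are therefore satisfied, and $\mu(\widetilde{Y}_1)=0$ follows. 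The same conclusion can alternatively be reached through the Euler system argument of Rubin, Theorem~4.1 of~\cite{Rub91}.

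The main obstacle in carrying out this plan rigorously is the bookkeeping involved in matching the formulations. Hida's and Rubin's results are stated for class groups with a prescribed branch character attached to precise conductor and ramification data, whereas $\widetilde{Y}_i$ is defined here via the $\varphi_i$-isotypic component of the Galois group of the $p$-Hilbert class field of $\Kan(\varphi_i)$ and a priori depends on the set $S$. One has to verify that taking $\varphi_i$-eigenspaces for $\op{Gal}(K(\varphi_i)/K)$ produces exactly the branch-character Iwasawa module to which Hida's theorem applies, and that the places in $S\setminus S_p$, which are allowed to ramify but not split in the definition of $\widetilde{Y}_i$, do not interfere with the non-triviality hypothesis at $p$ on which the $\mu=0$ result rests.
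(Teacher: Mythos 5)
Your preliminary reductions are sound and match the paper: the containment $\mathcal{L}^{(i)}\subset\mathcal{M}^{(i)}$ does yield a surjection $\widetilde{Y}_i\twoheadrightarrow Y_i$, so vanishing of $\mu(\widetilde{Y}_i)$ implies vanishing of $\mu(Y_i)$; and the relation $\varphi_1\varphi_2=\overline{\chi}$ makes the local non-degeneracy hypothesis on $\varphi_1$ equivalent to the one on $\varphi_2$.

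The gap is in how you combine the three cited results. You read Hida's Theorem I (and Finis's Theorem 1.1) as directly asserting the vanishing of the $\mu$-invariant of a $\varphi_i$-branch of the class group tower, and you present Rubin's Theorem 4.1 as an \emph{alternative} route to the same conclusion. Neither reading is correct. Hida's and Finis's theorems are purely analytic: they establish the vanishing of the $\mu$-invariant of the anticyclotomic Katz $p$-adic $L$-function. To translate that into a statement about an algebraic Iwasawa module one needs the Iwasawa main conjecture for imaginary quadratic fields, and that is precisely what Rubin's result supplies. The three inputs are therefore complementary, not interchangeable. The paper's proof goes through an intermediate module $\mathcal{Q}^{(i)}$, the Galois group of the maximal abelian pro-$p$ extension of $\Kan(\varphi_i)$ unramified away from the primes over $p$; Rubin identifies the characteristic ideal of (the appropriate eigenspace of) $\mathcal{Q}^{(i)}$ with the anticyclotomic $p$-adic $L$-function, Hida and Finis show that $L$-function has $\mu=0$, hence $\mu(\mathcal{Q}^{(i)})=0$, and finally $\widetilde{Y}_i$ is a quotient of $\mathcal{Q}^{(i)}$ (since an unramified extension is a fortiori unramified outside $p$), giving $\mu(\widetilde{Y}_i)=0$. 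Your proposal never introduces $\mathcal{Q}^{(i)}$ and never passes through the main conjecture, so the step from the citations to $\mu(\widetilde{Y}_1)=0$ is not actually justified as written.
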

\begin{proof}
Since $\varphi_1\varphi_2=\overline{\chi}$, the condition on $\varphi_1$ is equivalent to that on $\varphi_2$.
Let $\varphi_i$ be either of the characters, $\varphi_1$ or $\varphi_2$.
Let $\mathcal{Q}^{(i)}$ be the maximal abelian pro-$p$ extension of $\Kan(\varphi_i)$ which is unramified away from $v(\Kan)$.
Rubin identifies $\mathcal{Q}^{(i)}$ with an anticyclotomic $p$-adic L-function, and Hida (also T. Finis) proves that the $\mu$-invariant of such an anticyclotomic $p$-adic L-function vanishes.
Combining their results, we deduce that $\mu(\mathcal{Q}^{(i)})=0$ for $i=1,2$.
Since $\widetilde{Y_i}$ is a quotient of $\mathcal{Q}^{(i)}$, it follows that $\mu(\widetilde{Y}_i)=0$ for $i=1,2$.
Hence, $\mu(Y_i)=0$ for $i=1,2$ as well.
\end{proof}
The most explicit examples of reducible Galois representations arise from elliptic curves with $p$-torsion points over the base field.
In this setting, the conditions of Theorem \ref{hidarubin} are not satisfied since $\{\varphi_1,\varphi_2\}=\{1, \overline{\chi}\}$.
We now state the main theorems of this article.
\begin{Th}\label{theorem1}
Suppose the following conditions hold.
\begin{enumerate}
\item $\closure{\rho}_E$ is reducible.
    \item Hypothesis $\ref{splithyp}$ is satisfied.
    \item If $\closure{\rho}_E$ is indecomposable, then Hypothesis $\ref{splithyp2}$ is satisfied.
    \item For $i=1,2$, the $\mu$-invariant $\mu(Y_i)=0$ (for instance, if the conditions of Theorem $\ref{hidarubin}$ are satisfied).
\end{enumerate}
Then, the fine Selmer group $\mathcal{R}_{p^{\infty}}(E/\Kan)$ is $\Lambda$-cotorsion with $\mu^{\op{fine}}(E/\Kan)=0$.
\end{Th}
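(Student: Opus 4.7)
The plan is to reduce the statement, via Lemma~\ref{lemma27}, to proving that $\mathcal{R}_{p^{\infty}}(E/\Kan)[p]$ is finite, and then to control this finite object through the residual representation.

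First, I would exploit the Kummer sequence $0\to E[p]\to E[p^{\infty}]\xrightarrow{\times p} E[p^{\infty}]\to 0$ to produce a natural map from the mod-$p$ fine Selmer group $\mathcal{R}^T(E[p]/\Kan)$ (for a suitably enlarged set of primes $T\supseteq S$, in the sense of Definition~\ref{Tfine}) to $\mathcal{R}_{p^{\infty}}(E/\Kan)[p]$. The cokernel of this map is controlled by local cohomology groups at primes in $S$; the choice of $T$ is dictated by which bad primes are infinitely decomposed in $\Kan$, and Hypothesis~\ref{splithyp} is precisely what guarantees, through Fact~\ref{fact1}, that the primes in $\Sigma$ responsible for pathological local behaviour remain finitely decomposed. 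Hence it suffices to verify finiteness of $\mathcal{R}^T(E[p]/\Kan)$.

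Next, the Galois-stable filtration $0\to \F_p(\varphi_1)\to E[p]\to \F_p(\varphi_2)\to 0$ induces a long exact cohomology sequence, which, compared termwise against the local conditions defining the fine Selmer groups, should yield an exact sequence of $\Omega$-modules
\[
\mathcal{R}^T(\F_p(\varphi_1)/\Kan)\longrightarrow \mathcal{R}^T(E[p]/\Kan)\longrightarrow \mathcal{R}^T(\F_p(\varphi_2)/\Kan)
\]
up to finite error. In the indecomposable case the cocycle $\beta$ obstructs exactness and Hypothesis~\ref{splithyp2} enters here to bound the resulting local discrepancies at primes in $\Sigma(\varphi_2)$. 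Thus it is enough to show that each of $\mathcal{R}^T(\F_p(\varphi_i)/\Kan)$ is finite.

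Finally, I would identify $\mathcal{R}^T(\F_p(\varphi_i)/\Kan)$, through inflation-restriction (permissible since $p\nmid [K(\varphi_i):K]$) together with the class field theory description of abelian $p$-extensions of $\Kan(\varphi_i)$ unramified away from $S$ and split at $S(\Kan(\varphi_i))$, with (the Pontryagin dual of) the mod-$p$ reduction of the $\varphi_i$-eigenspace $Y_i=\mathcal{X}^{(i)}_{\varphi_i}$. Since $\mu(Y_i)=0$ by hypothesis~(4), the torsion $\Lambda$-module $Y_i$ has finite $Y_i/pY_i$ (by the same structure-theoretic argument as in Lemma~\ref{lemma27}), and so the fine Selmer groups of $\F_p(\varphi_i)$ are finite, completing the argument.

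The main obstacle I anticipate is the middle step: the short exact sequence of Galois modules does not translate automatically into an exact sequence of Selmer groups, because the local conditions at primes dividing $\mathcal{N}/\closure{\mathcal{N}}$ (and, in the indecomposable case, at primes where $\varphi_2$ is trivial) can behave differently for $E[p]$ than for the graded pieces $\F_p(\varphi_i)$. Verifying that Hypotheses~\ref{splithyp} and \ref{splithyp2} suffice to make these discrepancies finite requires a careful case-by-case local analysis at each prime of $\Sigma$ and $\Sigma(\varphi_2)$, using the tame/wild inertia structure together with Fact~\ref{fact1} to ensure only finitely many primes of $\Kan$ contribute.
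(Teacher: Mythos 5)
Your proposal follows essentially the same strategy as the paper: reduce via the structure theory (Lemma~\ref{lemma27} and Proposition~\ref{Rrhobarfinite}) to finiteness of the mod-$p$ fine Selmer group, decompose that group via the filtration $0\to\F_p(\varphi_1)\to E[p]\to\F_p(\varphi_2)\to 0$ with the connecting-map discrepancies controlled by Hypotheses~\ref{splithyp} and~\ref{splithyp2} together with Fact~\ref{fact1}, and then identify $\mathcal{R}(\F_p(\varphi_i)/\Kan)$ with $\op{Hom}(Y_i/pY_i,\F_p)$ via inflation-restriction so that $\mu(Y_i)=0$ gives finiteness. One small point: in the paper's proof of this theorem no enlargement of $T$ beyond $S$ is needed (that device appears only in the proof of Theorem~\ref{theorem3}), but including it would not harm your argument.
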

This result is illustrated by Example $\ref{example1}$.
For elliptic curves over $\Q$ where hypotheses of Theorem \ref{hidarubin} are satisfied, a similar result is proven in \cite{CGLS20}. 

\begin{Remark} When all primes $v\in S\setminus S_p$ are split in $K$, the vanishing of the $\mu$-invariant for $Y_i$ is equivalent to the vanishing of the $\mu$-invariant of $\widetilde{Y}_i$ (see the proof of Proposition $\ref{Rvarphifinite2}$).
However, if a prime $v\in S\setminus S_p$ does not split in $K$, Fact $\ref{fact1}$ asserts that there are infinitely many primes above $v$. 
Hence, there are infinitely many splitting conditions cutting out $Y_i$ as a quotient of $\widetilde{Y}_i$.
The condition $\mu(Y_i)=0$ is therefore a more optimal condition than requiring $\mu(\widetilde{Y}_i)=0$.\end{Remark}
The next result is a (partial) converse to Theorem $\ref{theorem1}$.
\begin{Th}\label{theorem2}
Let $E$ be an elliptic curve defined over $K$.
Assume that
\begin{enumerate}
\item $\closure{\rho}_E$ is reducible.
    \item $\mathcal{R}_{p^{\infty}}(E/\Kan)$ is a cotorsion $\Lambda$-module with $\mu^{\op{fine}}(E/\Kan)=0$.
\end{enumerate}
If $\closure{\rho}_E$ is indecomposable, then $\mu(Y_1)=0$.
If $\closure{\rho}_E$ is split, then $\mu(Y_1)=\mu(Y_2)=0$.
\end{Th}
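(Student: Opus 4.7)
The starting point is Lemma \ref{lemma27}, which translates the hypothesis into the statement that $\mathcal{R}_{p^{\infty}}(E/\Kan)[p]$ is finite. The plan is to propagate this finiteness along two successive comparison maps---first to the mod-$p$ fine Selmer group $\mathcal{R}(E[p]/\Kan)$, and then to the fine Selmer group of the subcharacter $\F_p(\varphi_1)$ (and, in the split case, also of $\F_p(\varphi_2)$)---and finally to identify the latter with the Pontryagin dual of $Y_i/p$ via Hochschild--Serre and class field theory. Since a finitely generated $\Omega$-module has $\mu$-invariant zero if and only if its mod-$p$ reduction is finite, this would yield $\mu(Y_1)=0$ (and $\mu(Y_2)=0$ in the split case).

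For the first descent, I would apply $G_{K_S/\Kan}$-cohomology globally and $G_{\Kan_\eta}$-cohomology locally at every $\eta\in v(\Kan)$, $v\in S$, to the sequence $0\to E[p]\to E[p^{\infty}]\xrightarrow{\times p} E[p^{\infty}]\to 0$, and then apply the snake lemma on the commutative diagram defining the two fine Selmer groups. The expected output is a natural map $\mathcal{R}(E[p]/\Kan)\to \mathcal{R}_{p^{\infty}}(E/\Kan)[p]$ whose kernel is a subquotient of $E[p^{\infty}]^{G_{\Kan}}/p$, and whose cokernel is controlled by the local corrections $\bigoplus_{v\in S}\prod_{\eta\in v(\Kan)}E[p^{\infty}]^{G_{\Kan_\eta}}/p$. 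The former is a cofinitely generated $\Omega$-module of corank zero because $E[p^{\infty}]^{G_{\Kan}}$ sits inside $(\Q_p/\Z_p)^2$; controlling the latter uses Fact \ref{fact1} to separate finitely- from infinitely-decomposed primes and the uniform finiteness of each local $H^0$. Combining, one obtains that $\mathcal{R}(E[p]/\Kan)$ is finite.

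The second descent uses the residual sequence $0\to \F_p(\varphi_1)\to E[p]\to \F_p(\varphi_2)\to 0$. Its long exact cohomology sequence---together with the analogous local sequences and a snake-lemma argument---yields a map $\mathcal{R}(\F_p(\varphi_1)/\Kan)\to \mathcal{R}(E[p]/\Kan)$ whose kernel is bounded by $H^0(K_S/\Kan,\F_p(\varphi_2))$, a group of order at most $p$. Hence $\mathcal{R}(\F_p(\varphi_1)/\Kan)$ inherits finiteness. If $\closure{\rho}_E$ is split, the residual sequence splits Galois-equivariantly, so the cohomology and local conditions decompose as direct sums, giving $\mathcal{R}(E[p]/\Kan)\simeq \mathcal{R}(\F_p(\varphi_1)/\Kan)\oplus \mathcal{R}(\F_p(\varphi_2)/\Kan)$ and finiteness of both summands. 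This asymmetry is precisely why $\mu(Y_2)=0$ is claimed only in the split case: in the indecomposable case there is no useful fine-Selmer-level map $\mathcal{R}(E[p]/\Kan)\to\mathcal{R}(\F_p(\varphi_2)/\Kan)$.

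The final identification uses that $[\Kan(\varphi_i):\Kan]$ divides $p-1$ and so is coprime to $p$. Hochschild--Serre, applied globally and locally, identifies $\mathcal{R}(\F_p(\varphi_i)/\Kan)$ with the $\varphi_i$-isotypic component of $\mathcal{R}(\F_p/\Kan(\varphi_i))$, and class field theory identifies $\mathcal{R}(\F_p/\Kan(\varphi_i))$ with $\op{Hom}(\mathcal{X}^{(i)},\F_p)$. Taking the $\varphi_i$-eigenspace gives $\op{Hom}(Y_i/p,\F_p)$, so finiteness of $\mathcal{R}(\F_p(\varphi_i)/\Kan)$ is equivalent to $\mu(Y_i)=0$. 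The main obstacle I foresee is the bookkeeping in the second paragraph, in particular verifying that the local cokernel correction has $\Omega$-corank zero at primes $v\in S\setminus S_p$ for which $v(\Kan)$ is infinite: there the product $\prod_{\eta\mid v}$ is genuinely infinite, and one must exploit the uniform boundedness of $E[p^{\infty}]^{G_{\Kan_\eta}}$ as $\eta$ varies to conclude that the aggregate correction is cofinitely generated of corank zero.
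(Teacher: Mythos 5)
Your proof is essentially correct and follows the same broad strategy as the paper, but there is one persistent confusion that you should clear up: for this theorem you do \emph{not} need to control the cokernel of $\Psi:\mathcal{R}(E[p]/\Kan)\to\mathcal{R}_{p^{\infty}}(E/\Kan)[p]$, only its kernel. Finiteness propagates from target to source along a map with finite kernel regardless of the cokernel, so once you know $\ker\Psi\hookrightarrow H^0(\Kan,E[p^\infty])/p$ (a group of order at most $p^2$) you immediately get that $\mathcal{R}(E[p]/\Kan)$ is finite. Your ``main obstacle'' about bounding the local correction at infinitely-decomposed primes is therefore a phantom for Theorem~\ref{theorem2}. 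Worse, it is not merely superfluous: Theorem~\ref{theorem2} deliberately drops Hypothesis~\ref{splithyp}, and without that hypothesis the paper points out (Remark after Proposition~\ref{prop33}) that there is no apparent reason for $\ker h$, and hence $\op{cok}\Psi$, to be finite in general. Had you actually needed the cokernel, your proof would be stuck; luckily you do not. This is exactly the content of Proposition~\ref{Rrhobarfinite}(1) versus (2): part (1) is the direction you are using, and it holds unconditionally.

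The rest is sound and in a couple of places a bit cleaner than the paper's presentation. For the second descent you go directly from the injection $\F_p(\varphi_1)\hookrightarrow E[p]$ to a map $\mathcal{R}(\F_p(\varphi_1)/\Kan)\to\mathcal{R}(E[p]/\Kan)$ with kernel bounded by $H^0(K_S/\Kan,\F_p(\varphi_2))$ (order at most $p$), whereas the paper routes through the auxiliary group $\ker\alpha$ and the sequence \eqref{es2}; since $\mathcal{R}(\F_p(\varphi_1)/\Kan)\subseteq\ker\alpha$, the two arguments are equivalent, and yours is marginally more direct. (Note in passing that the paper's proof cites $H^0(K_S/\Kan,\F_p(\varphi_1))$ where the relevant bound really comes from $H^0(K_S/\Kan,\F_p(\varphi_2))$, as you correctly wrote---the connecting map in $0\to\F_p(\varphi_1)\to E[p]\to\F_p(\varphi_2)\to 0$ goes $H^0(\F_p(\varphi_2))\to H^1(\F_p(\varphi_1))$.) In the split case, your direct-sum decomposition $\mathcal{R}(E[p]/\Kan)\simeq\mathcal{R}(\F_p(\varphi_1)/\Kan)\oplus\mathcal{R}(\F_p(\varphi_2)/\Kan)$ is a clean alternative to the paper's ``interchange $\varphi_1$ and $\varphi_2$'' phrasing, and your observation that the indecomposable case gives no useful map toward $\mathcal{R}(\F_p(\varphi_2)/\Kan)$ correctly accounts for the asymmetry in the statement. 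The final identification of $\mathcal{R}(\F_p(\varphi_i)/\Kan)$ with $\op{Hom}(Y_i/p,\F_p)$ is Proposition~\ref{Rvarphifinite}, as you indicate.
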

The next corollary takes note of a special case of interest.
\begin{Corollary}\label{corollory33}
Let $E$ be an elliptic curve defined over $K$ and $S$ be the set of primes dividing $\mathcal{N}p$, where $\mathcal{N}$ is the conductor of $E$.
Assume that
\begin{enumerate}
\item $E(K)[p]\neq 0$.
     \item $\mathcal{R}_{p^{\infty}}(E/\Kan)$ is a cotorsion $\Lambda$-module with $\mu^{\op{fine}}(E/\Kan)=0$.
     \item\label{corollory33c3} the Heegner hypothesis is satisfied: for $v\in S\setminus S_p$, let $l$ be the prime number such that $v|l$, then $l$ splits in $K$.
\end{enumerate}
Then the (classical) Iwasawa $\mu$-invariant, $\mu(\Kan/K)=0$.
\end{Corollary}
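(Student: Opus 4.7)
The plan is to reduce the statement to Theorem \ref{theorem2} applied with $\varphi_1$ trivial, and then to bridge from the $S$-class group (which $Y_1$ represents) to the classical $p$-Hilbert class field tower via a short exact sequence whose kernel has trivially vanishing $\mu$-invariant.

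First I would use the assumption $E(K)[p]\neq 0$ to identify $\varphi_1 = 1$, so that $K(\varphi_1)=K$ and $\Kan(\varphi_1)=\Kan$. In this case $\op{Gal}(K(\varphi_1)/K)$ is trivial, the character decomposition collapses, and $\widetilde{Y}_1=\op{Gal}(\mathcal{M}^{(1)}/\Kan)$ is precisely the Galois group of the $p$-Hilbert class field of $\Kan$; in particular $\mu(\widetilde{Y}_1) = \mu(\Kan/K)$ by definition. Since $\closure{\rho}_E$ is reducible and assumption (2) holds, Theorem \ref{theorem2} then immediately delivers $\mu(Y_1)=0$, regardless of whether $\closure{\rho}_E$ is indecomposable or split.

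Next I would invoke the standard class field theoretic short exact sequence of $\Lambda$-modules
\[
0\rightarrow \mathcal{D}\rightarrow \widetilde{Y}_1\rightarrow Y_1\rightarrow 0,
\]
where $\mathcal{D}$ is the closed (and $\Gamma$-stable) subgroup of $\widetilde{Y}_1$ generated by the Frobenius elements $\op{Fr}_{\eta}$ attached to the primes $\eta \in S(\Kan)$. Because $\mathcal{M}^{(1)}/\Kan$ is unramified, each decomposition subgroup is topologically cyclic, and $\mathcal{L}^{(1)}$ is cut out of $\mathcal{M}^{(1)}$ precisely by killing these Frobenii. The Heegner hypothesis (3) together with Fact \ref{fact1} guarantees that every prime $v\in S\setminus S_p$ is finitely decomposed in $\Kan$, and Fact \ref{fact1} also says primes above $p$ are finitely decomposed. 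Hence $S(\Kan)$ is a finite set, and $\mathcal{D}$ is topologically finitely generated as a $\Z_p$-module. Any $\Lambda$-module which is finitely generated over $\Z_p$ is $\Lambda$-torsion with vanishing $\mu$-invariant, so $\mu(\mathcal{D})=0$.

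Finally, additivity of $\mu$ on short exact sequences of finitely generated $\Lambda$-modules gives $\mu(\Kan/K) = \mu(\widetilde{Y}_1) = \mu(\mathcal{D}) + \mu(Y_1) = 0$, as required. The main obstacle in this outline is the controlled comparison between $\widetilde{Y}_1$ and $Y_1$: one has to verify that the kernel of the quotient map is truly generated, as a $\Lambda$-module, by \emph{finitely many} Frobenius elements. This in turn requires \emph{both} that $\mathcal{M}^{(1)}/\Kan$ is unramified (so decomposition subgroups are pro-cyclic) and that the Heegner hypothesis is invoked to ensure that bad primes away from $p$ do not split infinitely in the anticyclotomic tower.
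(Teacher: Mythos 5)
Your proof is correct and follows essentially the same route as the paper's, but it is worth spelling out the correspondence. The paper's proof consists of the two-sentence invocation of Theorem~\ref{theorem2} (to get $\mu(Y_1)=0$) and Proposition~\ref{Rvarphifinite2} (to promote this to $\mu(\widetilde Y_1)=0$, hence $\mu(\Kan/K)=0$ since $\varphi_1=1$). Your argument re-derives, on the Galois-module side, exactly the content of Proposition~\ref{Rvarphifinite2}: the short exact sequence
\[
0\longrightarrow \mathcal{D}\longrightarrow \widetilde{Y}_1\longrightarrow Y_1\longrightarrow 0,
\]
with $\mathcal{D}$ topologically generated by the Frobenii at the primes of $S(\Kan)$, is precisely the Pontryagin dual (mod $p$) of the exact sequence
\[
0\rightarrow \mathcal{R}(\F_p(\varphi_1)/\Kan)\rightarrow \op{Hom}(\widetilde{Y}_1/p, \F_p) \rightarrow \prod_{\eta\in S(\Kan)} H^1_{\op{nr}}(\Kan_{\eta}, \F_p(\varphi_1))
\]
used in the paper, and in both versions the decisive input is the same: the Heegner hypothesis together with Fact~\ref{fact1} makes $S(\Kan)$ finite, so the third term contributes nothing to the $\mu$-invariant. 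Your formulation (finitely many Frobenii generate a $\Z_p$-finitely-generated, hence $\mu$-zero, $\Lambda$-module; then additivity of $\mu$) is slightly more self-contained and arguably more transparent than invoking Proposition~\ref{Rvarphifinite2} as a black box, but the mathematical content is identical. One small presentational remark: after applying Theorem~\ref{theorem2} you remark the conclusion holds ``regardless of whether $\closure{\rho}_E$ is indecomposable or split''; this is right, and hinges on having normalized $\varphi_1 = 1$ using $E(K)[p]\neq 0$ (in the split case this is a choice, in the indecomposable case it is forced). This normalization is what makes $\widetilde Y_1$ literally the classical Iwasawa module of $\Kan/K$, and it is exactly the observation the paper opens its proof with.
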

\begin{proof}
Note that $E(K)[p]\neq 0$ when the residual representation is of the form $\closure{\rho}_E\simeq \mtx{1}{\ast}{}{\overline{\chi}}$.
The result follows from Theorem $\ref{theorem2}$ and Proposition $\ref{Rvarphifinite2}$.
\end{proof}

\par Example $\ref{example2}$ illustrates Corollary $\ref{corollory33}$. Before proving the above results, a brief sketch of the method is provided.
The Kummer sequence \begin{equation}\label{kummersequence}0\rightarrow E[p]\rightarrow E[p^{\infty}]\xrightarrow{\times p} E[p^{\infty}]\rightarrow 0\end{equation}induces a comparison map 
\begin{equation}\label{psidef}\Psi:\mathcal{R}(E[p]/\Kan)\rightarrow \mathcal{R}_{p^{\infty}}(E/\Kan)[p].\end{equation}
The main theorem is deduced from the following statements.
\begin{enumerate}
    \item The kernel of $\Psi$ is finite.
    If Hypothesis $\ref{splithyp}$ is satisfied, then $\op{cok} \Psi$ is also finite.
    This step does not require any assumption on $\closure{\rho}_E$.
    \item Suppose that $\closure{\rho}_E$ is reducible.
    Further, if the conditions of Theorem $\ref{theorem1}$ are satisfied, then the mod-$p$ fine Selmer group $\mathcal{R}(E[p]/\Kan)$ is finite.
\end{enumerate}

We now analyze the map $\Psi$ and deduce a criterion for the vanishing of the fine $\mu$-invariant.
To show that $\op{cok}\Psi$ is finite when Hypothesis $\ref{splithyp}$ is satisfied, we prove the following result.

\begin{Lemma}\label{lemma31}
Let $v\in S\setminus\Sigma$ be a prime such that $v\nmid p$ and $\eta$ be a prime of $\Kan$ above $v$.
Then, the group $E(\Kan_{\eta})[p^{\infty}]$ is $p$-divisible.
\end{Lemma}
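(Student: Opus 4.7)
The proof will proceed by case analysis on the reduction type of $E$ at $v$. Set $L := \Kan_\eta$, let $l \ne p$ be the rational prime below $v$, and let $k_L$ denote the residue field of $L$. Since $\Kan/K$ is pro-$p$ abelian, $L/K_v$ is pro-$p$ abelian, and $k_L/k_v$ is pro-$p$. Because $v \in S \setminus S_p$, we have $v \mid \mathcal{N}$, so $E$ has bad (multiplicative or additive) reduction at $v$; I will handle these two cases separately.

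For the multiplicative case, Tate uniformization is the main tool: after an at most quadratic unramified twist to reduce to the split case, $E$ is isomorphic over $K_v$ to $\mathbb{G}_m/q_E^{\Z}$ with $q_E \in K_v^\times$. From this presentation, $E(L)[p^\infty]$ is generated by $\mu_{p^\infty}(L)$ together with $p^n$-th roots of $q_E$ lying in $L$, and the obstruction to $p$-divisibility is precisely the finite cyclic contribution from the latter. The conditions (1)--(3) defining $\Sigma$ are tailored so that this contribution is non-trivial: condition (1), $v \mid \mathcal{N}/\closure{\mathcal{N}}$, forces $v_v(q_E) \in p\Z$, while conditions (2)--(3) ensure, via the presence of $\mu_p$ in $K_v$, that the relevant $p$-power roots of $q_E$ actually land in $L$. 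Hence $v \notin \Sigma$ kills this obstruction and leaves $E(L)[p^\infty]$ inside the $p$-divisible group $\mu_{p^\infty}(L)$.

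In the additive case, the identity component of the special fiber of the N\'eron model is $\mathbb{G}_a$, which has no $p$-torsion as $l \ne p$; together with the formal group, which is also $p$-torsion free, this gives $E_0(L)[p^\infty] = 0$. Consequently $E(L)[p^\infty]$ embeds into the finite N\'eron component group $\Phi_v(k_L)$, and since a finite $p$-divisible group is trivial, it suffices to show $p \nmid |\Phi_v(k_L)|$. As $k_L/k_v$ is pro-$p$ and $|\Phi_v|$ is finite, one reduces to a finite pro-$p$ extension of $k_v$, and then Tate's algorithm combined with the Ogg--Saito conductor formula $\operatorname{ord}_v(\mathcal{N}) = 2 + \delta_v$ permits a Kodaira-type-by-Kodaira-type verification that failure of any of conditions (1)--(3) forces the $p$-part of $|\Phi_v|$ to vanish.

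The hard part will be the additive-reduction case, where one must traverse the Kodaira types and carefully match each failure of (1)--(3) with the vanishing of the $p$-part of $|\Phi_v|$. The multiplicative case is routine given Tate uniformization, though the interaction between non-split reduction and the presence of $\mu_p$ in $K_v$ requires some bookkeeping.
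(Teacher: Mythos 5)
The paper's own proof is a two-line citation: it observes that $v\notin\Sigma$ forces one of three explicit reduction-type conditions to hold and then invokes the proofs of \cite[Lemma 4.1.2]{EPW} and \cite[Proposition 5.1(iii)]{HM99}. Your plan — Tate uniformization in the multiplicative case, and $E_0(L)[p^\infty]=0$ plus the N\'eron component group in the additive case — is indeed the mechanism behind those references, so the broad shape is right.

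However, there is a real gap, and it sits precisely in the step you call ``routine.'' You assert that once the $q_E$-contribution is killed, $E(L)[p^\infty]$ sits inside ``the $p$-divisible group $\mu_{p^\infty}(L)$.'' But $\mu_{p^\infty}(L)$ need not be $p$-divisible here. In the anticyclotomic setting the decomposition group $\Gamma_v\subseteq\Gamma$ can be trivial; by Fact~\ref{fact1} this happens exactly when $v\nmid p$ does \emph{not} split in $K$, and then $L=\Kan_\eta=K_v$ is a \emph{finite} extension of $\Q_l$. If $\mu_p\subset K_v$, then $\mu_{p^\infty}(L)\cong\mu_{p^k}$ for some finite $k\geq 1$, which is a nonzero finite cyclic $p$-group and hence \emph{not} $p$-divisible — and indeed one checks (via the Tate parametrization you set up) that $E(L)[p^\infty]=\mu_{p^\infty}(L)$ when $p\nmid\operatorname{ord}_v(q_E)$, which is exactly the situation that falls outside $\Sigma$ via condition (1). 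The cyclotomic references cited by the paper are safe because there the local field always contains the unramified $\Z_p$-extension of $\Q_l$, hence contains all of $\mu_{p^\infty}$ whenever it contains $\mu_p$; that dichotomy (either $\mu_{p^\infty}(L)=0$ or $\mu_{p^\infty}(L)=\mu_{p^\infty}$) does not persist for $\Kan_\eta$. You need either to restrict to primes $v$ that split in $K$ (which is not assumed in the statement), or to explain why this configuration cannot occur for $v\notin\Sigma$, and at present neither is done. By contrast, your ``hard part'' — the additive case — is actually easy: for $p\geq 5$ the geometric component group of an additive fiber has order dividing $12$, so it has no $p$-torsion at all; and for $p=3$ the only additive types with $3$-torsion in $\Phi_v$ are IV and IV$^*$, for which $E[3]^{I_v}$ is one-dimensional, whence $\operatorname{ord}_v(\closure{\mathcal{N}})=1<2=\operatorname{ord}_v(\mathcal{N})$ so that all of (1)--(3) hold and $v\in\Sigma$. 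So the additive case never reaches the problematic Kodaira types when $v\notin\Sigma$, and no type-by-type traversal is needed; the difficulty is concentrated where you expected routine bookkeeping.
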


\begin{proof}
Since $v\notin \Sigma$, (at least) one of the following conditions is satisfied:
 \begin{enumerate}
 \item $v\nmid (\mathcal{N}/\closure{\mathcal{N}})$,
  \item $p\geq 5$, $\mu_p \subset K_v$, and $E$ has non-split multiplicative reduction or additive reduction at $v$,
  \item $p=3$, $\mu_3\subset K_v$, and $E$ has non-split multiplicative reduction at $v$.
\end{enumerate}
The claim follows from the proof of \cite[Lemma 4.1.2]{EPW} and \cite[Proposition 5.1 (\rm{iii})]{HM99}.
\end{proof}
For $v\in S$, let $h_v$ denote the natural map
\[h_v:\mathcal{H}_v( \Kan, E[p])\rightarrow \mathcal{H}_v(\Kan, E[p^{\infty}])[p]\]induced from $\eqref{kummersequence}$.
For each prime $\eta|v$ of $\Kan$, set $h_{\eta}$ to denote the natural map \[h_{\eta}: H^1(\Kan_{\eta}, E[p])\rightarrow H^1(\Kan_{\eta}, E[p^{\infty}])[p],\] also induced from $\eqref{kummersequence}$.
\begin{Corollary}\label{cor32}
If Hypothesis $\ref{splithyp}$ is satisfied, then $\op{ker}h_v$ is finite for $v\in S$.
\end{Corollary}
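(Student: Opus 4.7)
The plan is to argue locally prime by prime, reducing the study of $\ker h_v$ to the kernels of the local maps $h_\eta$ for each $\eta\in v(\Kan)$, and then splitting into cases according to whether the Lemma~\ref{lemma31} hypothesis or Fact~\ref{fact1} controls the situation.

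\textbf{Step 1 (Local identification of the kernel).} First I would apply the long exact sequence attached to the Kummer sequence \eqref{kummersequence} over each completion $\Kan_\eta$. The piece
\[
H^0(\Kan_\eta, E[p^{\infty}]) \xrightarrow{\times p} H^0(\Kan_\eta, E[p^{\infty}]) \longrightarrow H^1(\Kan_\eta,E[p]) \xrightarrow{h_\eta} H^1(\Kan_\eta, E[p^{\infty}])[p]\longrightarrow 0
\]
gives a canonical isomorphism
\[
\ker h_\eta \;\simeq\; E(\Kan_\eta)[p^{\infty}]/p\,E(\Kan_\eta)[p^{\infty}].
\]
Because $\mathcal{H}_v(\Kan,E[p])=\prod_{\eta\in v(\Kan)} H^1(\Kan_\eta,E[p])$ (and likewise for $E[p^\infty]$), the map $h_v$ is the direct product of the $h_\eta$'s, so $\ker h_v=\prod_{\eta\in v(\Kan)}\ker h_\eta$.

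\textbf{Step 2 (Case $v\in S\setminus(\Sigma\cup S_p)$).} Here $v\nmid p$ and $v\notin\Sigma$. Lemma~\ref{lemma31} then tells us that $E(\Kan_\eta)[p^{\infty}]$ is $p$-divisible for every $\eta\mid v$, and therefore
\[
\ker h_\eta\;\simeq\;E(\Kan_\eta)[p^{\infty}]/p\,E(\Kan_\eta)[p^{\infty}]=0.
\]
Consequently the full product $\ker h_v$ vanishes, regardless of whether $v(\Kan)$ is finite or not.

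\textbf{Step 3 (Case $v\in\Sigma$ or $v\in S_p$).} If $v\in\Sigma$, Hypothesis~\ref{splithyp} says that the rational prime below $v$ splits in $K$, so by Fact~\ref{fact1} the set $v(\Kan)$ is finite. If $v\in S_p$, Fact~\ref{fact1} again gives finiteness of $v(\Kan)$. For each $\eta\mid v$, $E(\Kan_\eta)[p^{\infty}]$ is a Galois-stable subgroup of $E[p^{\infty}]\simeq (\Q_p/\Z_p)^2$, hence a cofinitely generated $\Z_p$-module of $\Z_p$-corank at most $2$; its mod-$p$ cotorsion $E(\Kan_\eta)[p^{\infty}]/p\,E(\Kan_\eta)[p^{\infty}]$ is therefore finite (indeed bounded by $p^2$). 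A finite product of finite groups being finite, $\ker h_v$ is finite.

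Combining the two cases covers all $v\in S$. The argument is essentially a bookkeeping matter once one has the local identification in Step~1 at hand; the only place where a real input is needed is the invocation of Lemma~\ref{lemma31}, and the only place where Hypothesis~\ref{splithyp} is actually used is Step~3, where it is precisely what is required to keep $v(\Kan)$ finite for the primes $v\in\Sigma$ that Lemma~\ref{lemma31} does not handle. I do not anticipate a genuine obstacle; the subtlety is simply to recognize that for $v\notin\Sigma$ we do \emph{not} need finiteness of $v(\Kan)$, because the local kernels vanish outright.
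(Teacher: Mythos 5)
Your proof is correct and follows essentially the same route as the paper's: identify $\ker h_\eta\simeq H^0(\Kan_\eta,E[p^\infty])/p$ via Kummer theory, use Lemma~\ref{lemma31} to kill the kernel when $v\in S\setminus(\Sigma\cup S_p)$, and use Hypothesis~\ref{splithyp} together with Fact~\ref{fact1} to get finiteness of $v(\Kan)$ in the remaining cases. The presentation is a bit more explicit about the product decomposition, but there is no substantive difference.
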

\begin{proof}
The kernel of $h_v$ is the product of $\ker h_{\eta}$, as $\eta$ ranges over $v(\Kan)$.
By Kummer theory, $\ker h_{\eta}$ is isomorphic to $H^0(\Kan_{\eta}, E[p^{\infty}])/p$; hence, it is finite.

First, consider the case when $v\in \Sigma\cup S_p$.
Let $l$ be the prime number for which $v|l$.
By Hypothesis $\ref{splithyp}$, if $v\in \Sigma$, then $l$ is split in $K$.
Therefore, by Fact $\ref{fact1}$, the set of primes $v(\Kan)$ is finite for $v\in \Sigma\cup S_p$.
Since there are only finitely many primes $\eta\in v(\Kan)$, $\op{ker} h_v$ is finite in this case.

Next, consider the case when $l\in S\setminus (\Sigma\cup S_p)$.
Lemma $\ref{lemma31}$ asserts that the group $E(\Kan_{\eta})[p^{\infty}]$ is $p$-divisible for all $\eta\in v(\Kan)$.
In this case,
\[\op{ker} h_{\eta}=H^0(\Kan_{\eta}, E[p^{\infty}])/p=0.\]
Therefore, $h_v$ is injective.
\end{proof}

\begin{Proposition}\label{prop33}
The kernel of $\Psi$ is finite.
If Hypothesis $\ref{splithyp}$ is satisfied, then $\op{cok} \Psi$ is also finite.
\end{Proposition}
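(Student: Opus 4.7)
The plan is to run a snake-lemma chase on the commutative diagram with exact rows induced by $\eqref{kummersequence}$:
\[
\begin{tikzcd}[column sep=small]
0 \ar[r] & \mathcal{R}(E[p]/\Kan) \ar[r] \ar[d, "\Psi"] & H^1(K_S/\Kan, E[p]) \ar[r, "\lambda"] \ar[d, "\alpha"] & \bigoplus_{v\in S}\mathcal{H}_v(\Kan, E[p]) \ar[d, "\beta"] \\
0 \ar[r] & \mathcal{R}_{p^\infty}(E/\Kan)[p] \ar[r] & H^1(K_S/\Kan, E[p^\infty])[p] \ar[r, "\lambda'"] & \bigoplus_{v\in S}\mathcal{H}_v(\Kan, E[p^\infty])[p]
\end{tikzcd}
\]
Here $\beta = \bigoplus_{v\in S} h_v$, and all vertical maps are induced by $\eqref{kummersequence}$.

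For the first assertion, the long exact sequence in $\op{G}_{K,S}$-cohomology attached to $\eqref{kummersequence}$ shows that $\alpha$ is surjective with kernel $E(\Kan)[p^\infty]/p$. Since $E(\Kan)[p^\infty]$ is finite (a classical fact for torsion of elliptic curves in $\Z_p$-extensions), $\ker \alpha$ is finite. Commutativity of the leftmost square forces $\ker \Psi \hookrightarrow \ker \alpha$, which yields the unconditional finiteness of $\ker \Psi$.

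For the second assertion, I would truncate the rows by replacing the rightmost terms with $\op{im}\lambda$ and $\op{im}\lambda'$, producing honest short exact sequences linked by $\Psi$, $\alpha$, and the restricted local map $\beta'\colon \op{im}\lambda \to \op{im}\lambda'$. The snake lemma then yields
\[
0 \to \ker \Psi \to \ker \alpha \to \ker \beta' \to \op{cok} \Psi \to \op{cok} \alpha \to \op{cok} \beta'.
\]
Because $\op{cok} \alpha = 0$, this forces $\op{cok} \Psi$ to be a quotient of $\ker \beta' \subseteq \ker \beta = \bigoplus_{v\in S} \ker h_v$. Under Hypothesis $\ref{splithyp}$, Corollary $\ref{cor32}$ provides that each $\ker h_v$ is finite, and since $S$ itself is finite, $\ker \beta$ is finite; hence $\op{cok} \Psi$ is finite. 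The only nontrivial inputs are the local finiteness statement (Corollary $\ref{cor32}$) and the global finiteness of $E(\Kan)[p^\infty]$; everything else is formal diagram chasing, so I expect no genuine obstacle beyond being careful about the truncation needed to apply the snake lemma with rows that are only left-exact.
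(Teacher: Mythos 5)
Your argument is correct and follows essentially the same route as the paper: a snake-lemma chase on the diagram induced by the Kummer sequence, truncating the rows so both are short exact, using surjectivity of the middle vertical map $\alpha$ to conclude $\op{cok}\Psi$ is a quotient of $\ker\beta'\subseteq\ker\beta$, and then invoking Corollary~\ref{cor32} to see that $\ker\beta$ is finite under Hypothesis~\ref{splithyp}.

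One small caveat: you justify the finiteness of $\ker\alpha\simeq H^0(\Kan,E[p^\infty])/p$ by appealing to the finiteness of $E(\Kan)[p^\infty]$ as ``a classical fact for torsion of elliptic curves in $\Z_p$-extensions.'' This is not quite a classical fact in the anticyclotomic setting: the theorems of Imai and Ribet that guarantee finite torsion concern fields like $K(\mu_{p^\infty})$, and the anticyclotomic $\Z_p$-extension is not covered by them (indeed, for CM curves with $p$ split, the Tate module can contain anticyclotomic-type characters). Fortunately the stronger claim is not needed: $H^0(\Kan,E[p^\infty])$ is a cofinitely generated $\Z_p$-module of corank at most $2$, so $H^0(\Kan,E[p^\infty])/p$ is automatically finite of order at most $p^2$, which is precisely what the paper observes. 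With that fix the proof is sound and matches the paper's.
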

\begin{proof}
The Kummer sequence $\eqref{kummersequence}$ induces the commutative diagram:
\[
{\footnotesize\begin{tikzcd}[column sep = small, row sep = large]
0\arrow{r} & \mathcal{R}(E[p]/\Kan) \arrow{r}\arrow{d}{\Psi} & H^1(K_S/\Kan, E[p])\arrow{r} \arrow{d}{g} & \op{im}(\closure{\Phi}_E)\arrow{r} \arrow{d}{h} & 0\\
0\arrow{r} & \mathcal{R}_{p^{\infty}}(E/\Kan)[p] \arrow{r} & H^1(K_S/\Kan, E[p^{\infty}])[p] \arrow{r}  &\bigoplus_{v\in S} \mathcal{H}_v(\Kan, E[p^{\infty}])[p].
\end{tikzcd}}
\]Here, $\closure{\Phi}_E$ is the natural map
\[\closure{\Phi}_E: H^1(K_S/\Kan, E[p])\rightarrow \bigoplus_{v\in S} \mathcal{H}_v(\Kan, E[p]).\] 
Clearly, the map $g$ is surjective.
The snake lemma yields an exact sequence,
\begin{equation}\label{ses}0\rightarrow \op{ker} \Psi \rightarrow \op{ker} g\rightarrow \op{ker} h \rightarrow \op{cok} \Psi \rightarrow 0.\end{equation} Since $\ker g \simeq H^0(\Kan, E[p^{\infty}])/p$, it is finite (with cardinality at most $p^2$).
From $\eqref{ses}$, we deduce that $\op{ker}\Psi$ is finite.
It follows from Corollary $\ref{cor32}$ that $\ker h$ is finite.
Therefore, $\op{cok}\Psi$ is finite as well.
\end{proof}
\begin{Remark}
The proof of the above proposition shows that $\ker h$ must be finite if $\mu^{\op{fine}}(E/\Kan)=0$ (even if Hypothesis $\ref{splithyp}$ is not satisfied).
This point is of considerable interest to the authors, since there is no apparent reason to suggest why this should be true in general.
\end{Remark}
\begin{Proposition}\label{Rrhobarfinite}
Let $E_{/K}$ be an elliptic curve.
The following assertions hold.
\begin{enumerate}
    \item If $\mathcal{R}_{p^\infty}(E/\Kan)$ is $\Lambda$-cotorsion with $\mu^{\op{fine}}(E/\Kan)=0$, then $\mathcal{R}(E[p]/\Kan)$ is finite.
    \item Assume that Hypothesis $\ref{splithyp}$ is satisfied.
    Then, the fine Selmer group is $\Lambda$-cotorsion with $\mu^{\op{fine}}(E/\Kan)=0$ if and only if $\mathcal{R}(E[p]/\Kan)$ is finite.
\end{enumerate}
\end{Proposition}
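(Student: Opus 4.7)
The plan is to deduce the proposition directly by combining Lemma~\ref{lemma27} with Proposition~\ref{prop33}. Recall that Lemma~\ref{lemma27} says that $\mathcal{R}_{p^\infty}(E/\Kan)$ is $\Lambda$-cotorsion with vanishing $\mu^{\op{fine}}$ if and only if $\mathcal{R}_{p^\infty}(E/\Kan)[p]$ is finite, while Proposition~\ref{prop33} supplies the comparison map
\[\Psi:\mathcal{R}(E[p]/\Kan)\longrightarrow \mathcal{R}_{p^{\infty}}(E/\Kan)[p]\]
with $\ker\Psi$ unconditionally finite and with $\op{cok}\Psi$ finite under Hypothesis~\ref{splithyp}. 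So both statements reduce to chasing finiteness through the four-term exact sequence
\[0\to \ker\Psi\to \mathcal{R}(E[p]/\Kan)\to \mathcal{R}_{p^{\infty}}(E/\Kan)[p]\to \op{cok}\Psi\to 0.\]

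For part (1), I would argue as follows. Assume $\mathcal{R}_{p^\infty}(E/\Kan)$ is $\Lambda$-cotorsion with $\mu^{\op{fine}}(E/\Kan)=0$. By Lemma~\ref{lemma27}, the group $\mathcal{R}_{p^\infty}(E/\Kan)[p]$ is finite, so $\op{im}\Psi$ is finite. Combined with the unconditional finiteness of $\ker\Psi$ from Proposition~\ref{prop33}, the exact sequence $0\to \ker\Psi\to \mathcal{R}(E[p]/\Kan)\to \op{im}\Psi\to 0$ forces $\mathcal{R}(E[p]/\Kan)$ to be finite. Note that this direction does \emph{not} require Hypothesis~\ref{splithyp}.

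For part (2), one direction is simply (1). For the converse, assume Hypothesis~\ref{splithyp} and suppose $\mathcal{R}(E[p]/\Kan)$ is finite. Then $\op{im}\Psi$ is finite, and Proposition~\ref{prop33} also gives that $\op{cok}\Psi$ is finite. Using the last two nonzero terms of the displayed exact sequence, $\mathcal{R}_{p^\infty}(E/\Kan)[p]$ sits in an extension of the finite group $\op{cok}\Psi$ by the finite group $\op{im}\Psi$, hence is itself finite. Applying Lemma~\ref{lemma27} once more yields that $\mathcal{R}_{p^\infty}(E/\Kan)$ is $\Lambda$-cotorsion with vanishing $\mu$-invariant, completing the proof.

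There is no serious obstacle in this argument: everything has been prepared by Lemma~\ref{lemma27} and Proposition~\ref{prop33}, and the proof is a pure diagram/finiteness chase. The only point worth flagging is the asymmetric role of Hypothesis~\ref{splithyp} — it is used only to control $\op{cok}\Psi$, which is why part (1) is unconditional while the reverse implication in part (2) genuinely needs the hypothesis (consistent with the remark following Proposition~\ref{prop33} that finiteness of $\ker h$ is automatic once $\mu^{\op{fine}}=0$, but not vice versa).
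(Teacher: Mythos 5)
Your proof is correct and follows precisely the route the paper intends: the paper's own proof is just the one-line statement that the result follows from Lemma~\ref{lemma27} and Proposition~\ref{prop33}, and your finiteness chase through the four-term exact sequence $0\to\ker\Psi\to\mathcal{R}(E[p]/\Kan)\to\mathcal{R}_{p^\infty}(E/\Kan)[p]\to\op{cok}\Psi\to 0$ is exactly the argument being summarized. Your remark correctly pinpoints the asymmetry: Hypothesis~\ref{splithyp} enters only through the finiteness of $\op{cok}\Psi$, which is why part~(1) is unconditional.
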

\begin{proof}
The assertion is a consequence of Lemma $\ref{lemma27}$ and Proposition $\ref{prop33}$.
\end{proof}

Let $F=K(E[p])$ be the field extension of $K$ left fixed by the kernel of $\closure{\rho}_E$.
Note that $F$ is a Galois extension of $K$ with Galois group isomorphic to the image of $\closure{\rho}_E$.
In particular, if $\closure{\rho}_E$ is reducible, then it is a $2$-step solvable extension of $K$.
Set $F_\infty$ to denote the compositum $\Kan\cdot F$.
In the remainder of this section, let $\mathcal{X}_E^S$ be the $S$-Hilbert class field extension of $F_\infty$. 
Note that there is a natural action of $\op{Gal}(F/K)$ on $\mathcal{X}_E^S$.
The $\mu$-invariant of $\mathcal{X}_E^S$ is taken with respect to the $\Z_p$-extension $F_\infty/F$.
Recall that in this section, there is no hypothesis on (the reducibility) of the residual representation $\closure{\rho}_E$.

\begin{Corollary}
Let $E_{/K}$ be an elliptic curve such that
\begin{enumerate}
    \item Hypothesis $\ref{splithyp}$ is satisfied.
    \item $\mu(\mathcal{X}_E^S)=0$.
\end{enumerate}Then, the fine $\mu$-invariant, $\mu^{\op{fine}}(E/\Kan)=0$.
\end{Corollary}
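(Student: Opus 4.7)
The plan is to reduce the vanishing of the fine $\mu$-invariant to a finiteness statement for the mod-$p$ fine Selmer group, and then to control that mod-$p$ fine Selmer group by restricting to the auxiliary field $F_\infty$, over which the residual representation becomes trivial.

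First, since Hypothesis $\ref{splithyp}$ is assumed, Proposition $\ref{Rrhobarfinite}$(2) reduces the problem to showing that $\mathcal{R}(E[p]/\Kan)$ is finite. Consider the restriction map from the Galois group of $\Kan$ to that of $F_\infty=\Kan\cdot F$. Because $F_\infty/\Kan$ is a finite extension of degree dividing $|\op{GL}_2(\F_p)|$, inflation--restriction gives an exact sequence
\[
0\rightarrow H^1\!\left(\op{Gal}(F_\infty/\Kan),E[p]^{\op{G}_{F_\infty,S}}\right) \rightarrow H^1(K_S/\Kan,E[p])\xrightarrow{\op{res}} H^1(K_S/F_\infty,E[p]),
\]
whose leftmost term is a finite group cohomology group with finite coefficients, hence finite. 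So it suffices to bound the image of $\mathcal{R}(E[p]/\Kan)$ under $\op{res}$.

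Next, since $F_\infty$ contains $K(E[p])$, the module $E[p]$ is trivial as a $\op{Gal}(K_S/F_\infty)$-module, so $H^1(K_S/F_\infty,E[p])=\op{Hom}(\op{Gal}(K_S/F_\infty),E[p])$. I claim the restriction of an element of $\mathcal{R}(E[p]/\Kan)$ to $F_\infty$ factors through $\op{Gal}(\mathcal{X}_E^S/F_\infty)$. Indeed, any such cocycle is unramified at all primes not lying above $S$ (from $K_S/\Kan$), and its restriction to any decomposition subgroup at a prime $\eta\in S(\Kan)$ is trivial, hence trivial on the (smaller) decomposition subgroup at any prime $\eta'\in S(F_\infty)$ above $\eta$. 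Translated into the language of abelian pro-$p$ extensions of $F_\infty$, this means the corresponding homomorphism cuts out an abelian pro-$p$ extension that is unramified everywhere and split at all primes of $S(F_\infty)$; this is exactly the condition defining $\mathcal{X}_E^S$. Therefore
\[
\op{im}(\op{res})\subseteq \op{Hom}\!\left(\op{Gal}(\mathcal{X}_E^S/F_\infty),\,E[p]\right).
\]

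Finally, $\op{Gal}(\mathcal{X}_E^S/F_\infty)$ is a finitely generated $\Lambda_F$-module for $\Lambda_F=\Z_p\llbracket\op{Gal}(F_\infty/F)\rrbracket$, and the hypothesis $\mu(\mathcal{X}_E^S)=0$ together with the structure theorem for finitely generated torsion $\Lambda_F$-modules (applied exactly as in the proof of Lemma $\ref{lemma27}$) forces $\op{Gal}(\mathcal{X}_E^S/F_\infty)/p$ to be finite. Consequently $\op{Hom}(\op{Gal}(\mathcal{X}_E^S/F_\infty),E[p])$ is finite, so $\op{im}(\op{res})$ is finite. Combined with the finite kernel of $\op{res}$, this shows $\mathcal{R}(E[p]/\Kan)$ is finite, and Proposition $\ref{Rrhobarfinite}$(2) yields $\mu^{\op{fine}}(E/\Kan)=0$.

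The main obstacle is the verification in the second paragraph that the local triviality condition defining $\mathcal{R}(E[p]/\Kan)$ at primes of $S(\Kan)$ translates, under restriction, into the correct global condition cutting out $\mathcal{X}_E^S$ (i.e.\ unramified everywhere and split at $S(F_\infty)$, rather than merely unramified outside $S$). Everything else is a straightforward application of inflation--restriction and the $\Lambda$-module structure theorem.
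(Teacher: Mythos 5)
Your proof is correct and follows essentially the same strategy as the paper: reduce via Proposition~\ref{Rrhobarfinite}(2) to finiteness of $\mathcal{R}(E[p]/\Kan)$, pass through inflation--restriction along $F_\infty/\Kan$, observe that the restriction of a fine Selmer class factors through $\op{Gal}(\mathcal{X}_E^S/F_\infty)$, and conclude from $\mu(\mathcal{X}_E^S)=0$. The paper's version of this proof is quite terse (and contains what looks like a typo in the rightmost term of its inflation--restriction sequence); your write-up correctly spells out the step the paper glosses over, namely that triviality of the fine Selmer class on decomposition groups at $S(\Kan)$ forces the restricted homomorphism on $\op{Gal}(K_S/F_\infty)$ to kill inertia everywhere and decomposition at $S(F_\infty)$, which is precisely the $S$-Hilbert class field condition defining $\mathcal{X}_E^S$.
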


\begin{proof}
By inflation-restriction, we have 
{\footnotesize\[
0 \rightarrow H^1\left(\op{Gal}(F_\infty/\Kan), E[p]\right) \rightarrow H^1(\Kan, E[p])\rightarrow \op{Hom}\left(F, E[p]\right)^{\op{Gal}(F/K)}.
\]}
This induces the following map with finite kernel
\begin{equation}\label{inflationrestriction}\mathcal{R}(E[p]/\Kan)\rightarrow \op{Hom}\left(\mathcal{X}_E^S/p, E[p]\right)^{\op{Gal}(F/K)}.\end{equation} 
Since $\mu(\mathcal{X}_E^S)=0$, the group $\mathcal{X}_E^S/p$ is finite.
By $\eqref{inflationrestriction}$, the mod-$p$ fine Selmer group $\mathcal{R}(E[p]/\Kan)$ is finite.
The result follows from Proposition $\ref{Rrhobarfinite}$.
\end{proof}

\section{Finiteness of the residual fine Selmer group}
\label{finiteness of the residual fine Selmer group}
Throughout this section $\closure{\rho}_E$ is assumed to be reducible.
Therefore, $E[p]$ fits into the short exact sequence, \[0\rightarrow \F_p(\varphi_1)\rightarrow E[p]\rightarrow \F_p(\varphi_2)\rightarrow 0.\]
This allows the analysis of the algebraic structure of the mod-$p$ fine Selmer group $\mathcal{R}(E[p]/\Kan)$ in terms of the mod-$p$ fine Selmer groups $\mathcal{R}(\mathbb{F}_p(\varphi_i)/\Kan)$ for $i=1,2$ (definitions given below).
This completes the proof of Theorems $\ref{theorem1}$ and $\ref{theorem2}$.

\begin{Definition}
Let $S$ be the set of primes dividing $\mathcal{N}p$, where $\mathcal{N}$ is the conductor of $E$.
The fine Selmer group $\mathcal{R}(\F_p(\varphi_i)/ \Kan)$ is defined as follows
\[\mathcal{R}(\F_p(\varphi_i)/ \Kan):=\op{ker}\left\{H^1(K_S/\Kan, \F_p(\varphi_i))\rightarrow \prod_{\eta\in S(\Kan)}H^1(\Kan_{\eta}, \F_p(\varphi_i))\right\}.\]
\end{Definition}
\noindent The dependence on $S$ is suppressed in the notation.
\begin{Proposition}\label{Rvarphifinite}Suppose that $\closure{\rho}_E$ is reducible and let $\varphi_i$ be one of the characters on the diagonal of $\closure{\rho}_E$.
Then, $\mathcal{R}(\F_p(\varphi_i)/\Kan)$ is finite if and only if $\mu(Y_i)=0$.
\end{Proposition}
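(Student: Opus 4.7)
The plan is to translate $\mathcal{R}(\F_p(\varphi_i)/\Kan)$ into the Pontryagin dual of $Y_i/pY_i$ by combining inflation--restriction with class field theory, and then to appeal to the structure theorem for finitely generated torsion $\Lambda$-modules. Set $L := \Kan(\varphi_i)$ and $G := \op{Gal}(L/\Kan)$; since $G$ injects into $\op{Gal}(K(\varphi_i)/K) \hookrightarrow \F_p^{\times}$, its order divides $p-1$ and is in particular prime to $p$. Consequently, $H^k(G, M) = 0$ for any $p$-torsion $G$-module $M$ and any $k \geq 1$.

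The first step is to apply inflation--restriction both globally and locally. Globally, since $\varphi_i$ is trivial on $\op{G}_L$, one obtains an isomorphism
\[H^1(K_S/\Kan, \F_p(\varphi_i)) \xrightarrow{\sim} H^1(K_S/L, \F_p)_{\varphi_i},\]
where the subscript denotes the $\varphi_i$-eigenspace under the twisted conjugation action of $G$. The same argument at each prime $\eta \in S(\Kan)$, with the local decomposition subgroup $G_{\tilde\eta/\eta} \subseteq G$ (still of order prime to $p$), shows that $H^1(\Kan_\eta, \F_p(\varphi_i)) \hookrightarrow H^1(L_{\tilde\eta}, \F_p)$, so that a global class is locally trivial at $\eta$ if and only if it is locally trivial at every prime $\tilde\eta \mid \eta$ of $L$. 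Assembling the global isomorphism with these local identifications yields
\[\mathcal{R}(\F_p(\varphi_i)/\Kan) \cong \mathcal{R}(\F_p/L)_{\varphi_i}.\]

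Next, class field theory identifies $\mathcal{R}(\F_p/L)$ with the Pontryagin dual of $\mathcal{X}^{(i)}/p\mathcal{X}^{(i)}$: a homomorphism $\op{Gal}(K_S/L) \to \F_p$ lies in the fine Selmer group precisely when the cyclic $\F_p$-extension of $L$ it defines is unramified outside $S(L)$ and completely split at every prime of $S(L)$, and such extensions are exactly the subextensions of $\mathcal{L}^{(i)}$. Because $|G|$ is prime to $p$, taking the $\varphi_i$-eigenspace is an exact functor, and the $\varphi_i$-eigenspace of $\mathcal{X}^{(i)}/p$ coincides with $Y_i/pY_i$. Hence
\[\mathcal{R}(\F_p(\varphi_i)/\Kan) \cong \op{Hom}\bigl((\mathcal{X}^{(i)}/p)_{\varphi_i},\, \F_p\bigr) = \op{Hom}(Y_i/pY_i,\, \F_p).\]

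Finally, $\mathcal{X}^{(i)}$ is a finitely generated torsion $\Lambda$-module, and hence so is its direct summand $Y_i$. By the structure theorem, $Y_i/pY_i$ is finite if and only if $\mu(Y_i) = 0$, and the Pontryagin dual is finite if and only if $Y_i/pY_i$ is; this yields the desired equivalence. The main subtle point in the plan is the matching of local splitting conditions under the passage from $\Kan$ to $L$, which is handled uniformly by the prime-to-$p$ local inflation--restriction argument.
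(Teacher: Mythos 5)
Your proposal is correct and follows essentially the same route as the paper: inflation--restriction combined with the prime-to-$p$ order of $\op{Gal}(\Kan(\varphi_i)/\Kan)$ to pass to $L=\Kan(\varphi_i)$, class field theory to identify the resulting eigenspace with $\op{Hom}(Y_i/pY_i,\F_p)$, and the structure theorem to convert finiteness of $Y_i/pY_i$ into $\mu(Y_i)=0$. You spell out the local splitting conditions and the class-field-theoretic dictionary a bit more explicitly than the paper does, but the skeleton of the argument is identical.
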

\begin{proof}
Set $\Delta:=\op{Gal}(K(\varphi_i)/K)$.
Since the order of $\Delta$ is coprime to $p$, 
\[\op{Gal}(\Kan(\varphi_i)/K)\simeq \Delta\times \op{Gal}(\Kan/K).\] 
The group $\op{Gal}(\Kan(\varphi_i)/K)$ acts on $\mathcal{X}^{(i)}$.
Let $g\in \op{Gal}(\Kan(\varphi_i)/K)$ and $x\in \mathcal{X}^{(i)}$. 
Choose a lift $\tilde{g}\in \op{Gal}(\mathcal{L}^{(i)}/K)$, and set $g\cdot x:=\tilde{g} x\tilde{g}^{-1}$.
Since $\mathcal{X}^{(i)}$ is abelian, $g\cdot x$ is independent of the choice of lift, $\tilde{g}$.
This induces the action of $\Delta$ on $\mathcal{X}^{(i)}$.
There is a natural isomorphism
\[\op{Hom}(Y_i,\F_p)\simeq \op{Hom}(\mathcal{X}^{(i)}, \F_p(\varphi_i))^{\Delta}.\]
Since the order of $\op{Gal}(\Kan(\varphi_i)/\Kan)$ is prime to $p$, it follows (from restriction-corestriction) that the cohomology group $H^j(\op{Gal}(\Kan(\varphi_i)/\Kan),\F_p(\varphi_i))=0$ for $j>0$. 
It follows from the inflation-restriction sequence that 
\[H^1(K_S/\Kan, \F_p(\varphi_i))\xrightarrow{\sim}\op{Hom}(\op{Gal}(K_S/\Kan(\varphi_i)), \F_p(\varphi_i))^{\Delta},\]
where $\Delta$ is identified with $\op{Gal}(\Kan(\varphi_i)/\Kan)$.
This induces an isomorphism
\[\iota: \mathcal{R}(\F_p(\varphi_i)/\Kan)\xrightarrow{\sim} \op{Hom}(Y_i/pY_i,\F_p).\]
It is an easy consequence of the structure theory of $\Lambda$-modules (see the argument in the proof of Lemma $\ref{lemma27}$) that the $\mu$-invariant of $Y_i$ is zero if and only if $Y_i/pY_i$ is finite.
The result follows.
\end{proof}
\begin{Proposition}\label{Rvarphifinite2}
Suppose that $\closure{\rho}_E$ is reducible and $\varphi_i$ is one of the characters on the diagonal.
Suppose that the following assumptions hold.
\begin{enumerate}
    \item $\mathcal{R}(\F_p(\varphi_i)/\Kan)$ is finite.
    \item The Heegner hypothesis is satisfied: let $v\in S\setminus S_p$ and $l$ be the prime number such that $v|l$, then $l$ splits in $K$.
\end{enumerate}
    Then, we have that $\mu(\widetilde{Y}_i)=0$.
\end{Proposition}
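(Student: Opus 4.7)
The plan is to deduce $\mu(\widetilde{Y}_i)=0$ from $\mu(Y_i)=0$ by controlling the kernel of the natural surjection $\widetilde{Y}_i \twoheadrightarrow Y_i$. By Proposition~\ref{Rvarphifinite}, the first hypothesis is equivalent to $\mu(Y_i)=0$, so I would consider the short exact sequence of $\Lambda$-modules
\[
0 \to N \to \widetilde{Y}_i \to Y_i \to 0,
\]
where $N$ is defined as the kernel. Since the $\mu$-invariant is additive on short exact sequences of finitely generated torsion $\Lambda$-modules, it suffices to show $\mu(N)=0$.

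To identify $N$ Galois-theoretically, I would use that $\mathcal{L}^{(i)}$ is the subfield of $\mathcal{M}^{(i)}$ cut out from $\mathcal{M}^{(i)}$ by requiring complete splitting at all primes above $S(\Kan(\varphi_i))$. By class field theory, this means the kernel of $\op{Gal}(\mathcal{M}^{(i)}/\Kan(\varphi_i)) \twoheadrightarrow \op{Gal}(\mathcal{L}^{(i)}/\Kan(\varphi_i))$ is exactly the subgroup generated by the Frobenius elements $\op{Frob}_\eta$ at $\eta \in S(\Kan(\varphi_i))$. Projecting to $\varphi_i$-eigenspaces under the action of $\Delta = \op{Gal}(K(\varphi_i)/K)$ presents $N$ as the $\Z_p$-span of the $\varphi_i$-components of these Frobenius elements. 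Next I would invoke the Heegner hypothesis together with Fact~\ref{fact1}: for $v \in S\setminus S_p$, the underlying rational prime $l$ splits in $K$, so $v$ is finitely decomposed in $\Kan$; primes above $p$ are always finitely decomposed. Since $\Kan(\varphi_i)/\Kan$ is a finite extension, $S(\Kan(\varphi_i))$ is a finite set. Each $D_\eta = \overline{\langle \op{Frob}_\eta\rangle}$ is procyclic, being a quotient of $\Z_p$ (the local maximal unramified pro-$p$ Galois group at $\eta$), so $N$ is finitely generated as a $\Z_p$-module.

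To conclude, I would invoke the structure theorem: a $\Lambda$-module that is finitely generated over $\Z_p$ must be $\Lambda$-torsion with $\mu$-invariant zero, because the elementary summands $\Lambda/(p^m)$ have infinite $\Z_p$-rank and so cannot appear. Hence $\mu(N)=0$, and additivity of $\mu$ in the short exact sequence above yields $\mu(\widetilde{Y}_i)=\mu(N)+\mu(Y_i)=0$, as required. The main technical subtlety I foresee is verifying that the $\Lambda$-submodule generated by the decomposition subgroups coincides with the $\Z_p$-span described above; this holds automatically because $\Gamma$ acts transitively on the set of primes $\eta$ above a fixed place of $K(\varphi_i)$, so once one includes all of $S(\Kan(\varphi_i))$ the $\Gamma$-orbit is already exhausted. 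Everything else is routine class field theory and structure theory of Iwasawa modules.
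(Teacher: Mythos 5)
Your argument is correct, but it takes a genuinely different route from the paper's. You work directly with the Iwasawa modules $\widetilde{Y}_i$ and $Y_i$, forming the short exact sequence $0\to N\to\widetilde{Y}_i\to Y_i\to 0$, identifying $N$ via class field theory as the closed subgroup generated by Frobenii at the finitely many primes of $S(\Kan(\varphi_i))$, concluding that $N$ is finitely generated over $\Z_p$, hence $\Lambda$-torsion with $\mu(N)=0$, and then invoking additivity of $\mu$. The paper instead stays on the Galois-cohomology side: it introduces the strict (``unramified'') Selmer group $\mathcal{R}'(\F_p(\varphi_i)/\Kan)\simeq\op{Hom}(\widetilde{Y}_i/p,\F_p)$ alongside $\mathcal{R}(\F_p(\varphi_i)/\Kan)\simeq\op{Hom}(Y_i/p,\F_p)$, fits them into the exact sequence
\[
0\rightarrow \mathcal{R}(\F_p(\varphi_i)/\Kan)\rightarrow \op{Hom}(\widetilde{Y}_i/p, \F_p(\varphi_i)) \rightarrow \prod_{\eta\in S(\Kan)} H^1_{\op{nr}}(\Kan_{\eta}, \F_p(\varphi_i)),
\]
and deduces that $\widetilde{Y}_i/p$ is finite because both the left and right terms are finite (the right by Fact~\ref{fact1} and the Heegner hypothesis). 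Both proofs hinge on the same two inputs -- the identification of $Y_i$, $\widetilde{Y}_i$ with eigenspaces of $S$-class groups via inflation-restriction, and finiteness of $S(\Kan)$ -- but your version avoids the mod-$p$ dualization and is a touch more module-theoretic, at the mild cost of having to justify (as you do) that $N$ is topologically generated by finitely many Frobenius elements and hence finitely generated over $\Z_p$. The paper's version meshes more directly with the isomorphisms already established in Proposition~\ref{Rvarphifinite}, which is presumably why the authors chose it. One small point worth making explicit in your write-up: $\widetilde{Y}_i$ is known to be a finitely generated torsion $\Lambda$-module (by Iwasawa's theorem applied to $\Kan(\varphi_i)/K(\varphi_i)$), which is needed before additivity of $\mu$-invariants can be invoked cleanly.
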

This result is used in the proof of Corollary $\ref{corollory33}$ where it is further assumed that $\varphi_i=1$.
In this special case, the assertion is that the (classical) Iwasawa invariant $\mu(\Kan/K)$ vanishes.
\begin{proof}
Recall that the Galois extensions $\mathcal{M}^{(i)}$ and $\mathcal{X}^{(i)}$ of $\Kan(\varphi_i)$ are defined on p. 7.
Consider the natural action of $\Delta:=\op{Gal}(K(\varphi_i)/K)$ on $\mathcal{M}^{(i)}$ (resp. $\mathcal{X}^{(i)}$) and recall that $\widetilde{Y}_i$ (resp. $Y_i$) denotes $\varphi_i$-component of $\mathcal{M}^{(i)}$ (resp. $\mathcal{X}^{(i)})$ with respect to this action.
At each prime $\eta$ of $\Kan$, set $H^1_{\op{nr}}(\Kan_{\eta}, \F_p(\varphi_i))$ to denote the subspace of unramified cohomology classes in $H^1(\Kan_{\eta}, \F_p(\varphi_i))$.
Set $\mathcal{R}'(\F_p(\varphi_i)/\Kan)$ to consist of cohomology classes $f\in H^1(K_S/\Kan, \F_p(\varphi_i))$ which are unramified at every prime $\eta\in S(\Kan)$.
By an application of inflation-restriction (see the argument in the proof of Proposition $\ref{Rvarphifinite}$), 
\[\begin{split}&\mathcal{R}'(\F_p(\varphi_i)/\Kan)\simeq \op{Hom}(\mathcal{M}^{(i)}/p, \F_p(\varphi_i))^{\Delta}\simeq \op{Hom}(\widetilde{Y}_i/p, \F_p),\\&\mathcal{R}(\F_p(\varphi_i)/\Kan)\simeq \op{Hom}(\mathcal{X}^{(i)}/p, \F_p(\varphi_i))^{\Delta}\simeq \op{Hom}(Y_i/p, \F_p).\end{split}\]
The groups fit into an exact sequence,
\[0\rightarrow \mathcal{R}(\F_p(\varphi_i)/\Kan)\rightarrow \op{Hom}(\widetilde{Y}_i/p, \F_p(\varphi_i)) \rightarrow \prod_{\eta\in S(\Kan)} H^1_{\op{nr}}(\Kan_{\eta}, \F_p(\varphi_i)).\]
By Fact $\ref{fact1}$, the set $S(\Kan)$ is finite. 
Hence, the group on the right is finite. We deduce that $\widetilde{Y}_i/p$ is finite, therefore the $\mu$-invariant of $\widetilde{Y}_i$ is zero.
\end{proof}

\begin{proof}[Proof of Theorem $\ref{theorem1}$]
\par By Proposition $\ref{Rrhobarfinite}$, it suffices to show that $\mathcal{R}(E[p]/\Kan)$ is finite.
The short exact sequence \[0\rightarrow \F_p(\varphi_1)\rightarrow E[p] \rightarrow \F_p(\varphi_2)\rightarrow 0,\] induces the long exact sequence
\[\cdots\rightarrow H^0(\Kan_{\eta}, \F_p(\varphi_2))\xrightarrow{\delta_{\eta}^0} H^1(\Kan_{\eta}, \F_p(\varphi_1))\rightarrow H^1(\Kan_{\eta}, E[p])\rightarrow \cdots .\]
Set $S_{\op{ac}}:=S(\Kan)$ and \[U:=\prod_{\eta\in S_{\op{ac}}}\op{image} \delta_{\eta}^0 \subset \prod_{\eta\in S_{\op{ac}}} H^1(\Kan_{\eta}, \F_p(\varphi_1)).\]
If $\closure{\rho}_E$ is split, then $\delta_{\eta}^0$ is the 0 map; hence, $U=0$. \newline
Next, consider the case when $\closure{\rho}_E$ is indecomposable.
When $v\in S\setminus (\Sigma(\varphi_2)\cup S_p)$, note that $H^0(K_v,\F_p(\varphi_2))=0 $.
Let $\eta\in v(\Kan)$.
Since $\Kan_{\eta}/K_v$ is a $p$-extension, for all $v\in S\setminus (\Sigma(\varphi_2)\cup S_p)$, the product $\prod_{\eta\in v(\Kan)}H^0(\Kan_{\eta},\F_p(\varphi_2))=0$.
On the other hand, for all primes $v\in \Sigma(\varphi_2)\cup S_p$, it follows from Hypothesis $\ref{splithyp2}$ and Fact $\ref{fact1}$ that there are only finitely many primes $\eta$ in $v(\Kan)$.
Therefore, the assumptions imply that $U$ is finite.
\par Consider the diagram
\[
{\footnotesize \begin{tikzcd}
 & H^1(K_S/\Kan, \F_p(\varphi_1)) \arrow{d}{\alpha} \arrow{r} & H^1(K_S/\Kan, E[p]) \arrow{r} \arrow{d}{\beta} & H^1(K_S/\Kan, \F_p(\varphi_2)) \arrow{d}{\gamma}\\
0\arrow{r}& (\displaystyle{\prod_{\eta\in S_{\op{ac}}}} H^1(\Kan_{\eta}, \F_p(\varphi_1)))/U\arrow{r}  &  \displaystyle{\prod_{\eta\in S_{\op{ac}}}} H^1(\Kan_{\eta}, E[p]) \arrow{r} & \displaystyle{\prod_{\eta\in S_{\op{ac}}}} H^1(\Kan_{\eta}, \F_p(\varphi_2))
\end{tikzcd}}
\]
and the associated short exact sequence
\begin{equation}\label{es1}\op{ker} \alpha \rightarrow \mathcal{R}(E[p]/\Kan)\rightarrow \mathcal{R}(\F_p(\varphi_2)/\Kan).\end{equation}
The group $\op{ker}\alpha$ fits into an exact sequence 
\begin{equation}\label{es2}0\rightarrow \mathcal{R}(\F_p(\varphi_1)/\Kan)\rightarrow \op{ker}\alpha \rightarrow U.\end{equation}
Proposition $\ref{Rvarphifinite}$ asserts that $\mathcal{R}(\F_p(\varphi_i), \Kan)$ are finite for $i=1,2$.
This requires the assumption that $\mu(Y_i)=0$ for $i=1,2$.
It follows from $\eqref{es1}$ and $\eqref{es2}$ that $\mathcal{R}(E[p]/\Kan)$ is also finite.
By Lemma $\ref{lemma27}$, we have that $\mathcal{R}_{p^\infty}(E/\Kan)$ is $\Lambda$-cotorsion with $\mu^{\op{fine}}(E/\Kan)=0$.
This completes the proof.
\end{proof}

\begin{proof}[Proof of Theorem $\ref{theorem2}$]
\par Refer to the argument in the proof of Theorem $\ref{theorem1}$.
Lemma $\ref{lemma27}$ asserts that if $\mathcal{R}_{p^\infty}(E/\Kan)$ is $\Lambda$-torsion with $\mu^{\op{fine}}(E/\Kan)=0$, then $\mathcal{R}(E[p]/\Kan)$ is finite.
Refer to $\eqref{es1}$.
Since $H^0(K_S/\Kan, \F_p(\varphi_1))$ is finite, finiteness of $\mathcal{R}(E[p]/\Kan)$ implies that $\op{ker}\alpha$ is finite.
By $\eqref{es2}$, it is seen that $\mathcal{R}(\F_p(\varphi_1)/\Kan)$ is finite.
Therefore, by Proposition $\ref{Rvarphifinite}$, it follows that $\mu(Y_1)=0$.
This completes the proof in the case when $\closure{\rho}_E$ is indecomposable.
When $\closure{\rho}_E$ is split, the proof is completed by interchanging the roles of $\varphi_1$ and $\varphi_2$.
\end{proof}

\section{Congruent Galois representations}
\label{Congruent Galois representations}
\par In this section, we consider two elliptic curves $E_1$ and $E_2$ defined over $K$ which are $p$-congruent, i.e. their residual representations are isomorphic,
\[\closure{\rho}_{E_1}\simeq \closure{\rho}_{E_2}.\]
We make no assumption on the reducibility of the residual representations.
However, it is assumed that both $E_1$ and $E_2$ satisfy the Heegner hypothesis.
\begin{hypothesis}[Heegner hypothesis]
Let $E_{/K}$ be an elliptic curve with conductor $\mathcal{N}$, and $S$ be the set of primes dividing $\mathcal{N}p$.
Let $v\in S$ and $l$ be the prime number such that $v|l$.
Then, $l$ splits in $K$ if $l\neq p$.
\end{hypothesis}
\noindent We prove the following main result in this section.
\begin{Th}\label{theorem3}
Let $E_1$ and $E_2$ be elliptic curves over $K$. 
Suppose that the following assumptions hold. 
\begin{enumerate}
    \item Both $E_1$ and $E_2$ satisfy the Heegner hypothesis.
    \item The residual Galois representations, $\closure{\rho}_{E_1}$ and $\closure{\rho}_{E_2}$, are isomorphic.
\end{enumerate}
Then, 
\[\begin{split}& \mathcal{R}_{p^{\infty}}(E_1/\Kan) \text{ is }\Lambda\text{-cotorsion with }\mu^{\op{fine}}(E_1/\Kan)=0\\ \Leftrightarrow & \mathcal{R}_{p^{\infty}}(E_2/\Kan) \text{ is }\Lambda\text{-cotorsion with }\mu^{\op{fine}}(E_2/\Kan)=0.\end{split}\]
\end{Th}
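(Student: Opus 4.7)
The strategy is to reduce, via Proposition \ref{Rrhobarfinite}, to a comparison of the mod-$p$ fine Selmer groups $\mathcal{R}^{S_i}(E_i[p]/\Kan)$ where $S_i$ denotes the set of primes of $K$ dividing $\mathcal{N}_ip$, and then bridge the two curves by enlarging to a common set $T:=S_1\cup S_2$ and exploiting the isomorphism $E_1[p]\simeq E_2[p]$. Since the Heegner hypothesis is strictly stronger than Hypothesis \ref{splithyp} for each $E_i$, Proposition \ref{Rrhobarfinite}(2) applies and yields, for $i=1,2$:
\[
\mathcal{R}_{p^\infty}(E_i/\Kan) \text{ is }\Lambda\text{-cotorsion with }\mu^{\op{fine}}(E_i/\Kan)=0 \iff \mathcal{R}^{S_i}(E_i[p]/\Kan)\text{ is finite.}
\]

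The main step is to show that inflation induces an isomorphism $\mathcal{R}^{S_i}(E_i[p]/\Kan)\xrightarrow{\sim}\mathcal{R}^T(E_i[p]/\Kan)$. For $v\in T\setminus S_i$, one has $v\nmid p$ and $E_i$ has good reduction at $v$; by the Heegner hypothesis applied to whichever $E_j$ has $v$ as a bad prime, the rational prime below $v$ splits in $K$. By Fact \ref{fact1}, $v$ is finitely decomposed in $\Kan$, and because the anticyclotomic tower is unramified away from $p$, each completion $\Kan_\eta/K_v$ is a nontrivial unramified $\Z_p$-extension whose residue field is the unique $\Z_p$-extension $k_v^{(p)}$ of $k_v$. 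Consequently $\op{Gal}(\closure{k}_v/k_v^{(p)})\cong \prod_{q\neq p}\Z_q$ is pro-(prime to $p$), and since $E_i[p]$ is unramified at $v$, the key local vanishing $H^1_{\op{nr}}(\Kan_\eta,E_i[p])=0$ holds.

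This local vanishing enables the comparison in both directions. Any class in $\mathcal{R}^{S_i}(E_i[p]/\Kan)$, viewed via inflation inside $H^1(K_T/\Kan, E_i[p])$, is unramified at primes above $T\setminus S_i$ and therefore vanishes locally there, landing in $\mathcal{R}^T(E_i[p]/\Kan)$. Conversely, for $[c]\in\mathcal{R}^T(E_i[p]/\Kan)$, local vanishing at primes above $T\setminus S_i$ forces $[c]$ to restrict trivially to every inertia subgroup of $\op{Gal}(K_T/\Kan)$ above $T\setminus S_i$, and hence also to every inertia subgroup of $N:=\op{Gal}(K_T/K_{S_i})$; since $E_i[p]$ is unramified outside $S_i$, the group $N$ acts trivially on $E_i[p]$ and is topologically generated by these latter inertia subgroups, so the restriction of $[c]$ to $N$ vanishes and inflation-restriction produces a class in $H^1(K_{S_i}/\Kan, E_i[p])$ lying in $\mathcal{R}^{S_i}(E_i[p]/\Kan)$. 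To conclude, observe that $\mathcal{R}^T(E_i[p]/\Kan)$ depends only on $T$ and the $\op{G}_K$-module $E_i[p]$, so the hypothesis $E_1[p]\simeq E_2[p]$ produces an isomorphism $\mathcal{R}^T(E_1[p]/\Kan)\simeq \mathcal{R}^T(E_2[p]/\Kan)$; chaining these steps yields the claimed equivalence.

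The main obstacle is the local vanishing $H^1_{\op{nr}}(\Kan_\eta,E_i[p])=0$, which is precisely where the Heegner hypothesis is essential: without it, a prime $v\in T\setminus S_i$ could be infinitely decomposed in $\Kan$, giving $\Kan_\eta=K_v$ with generically nonzero unramified cohomology, and the enlargement from $S_i$ to $T$ would cease to be benign for the mod-$p$ fine Selmer group.
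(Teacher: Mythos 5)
Your proof is correct and follows essentially the same strategy as the paper's (reduce to finiteness of the mod-$p$ fine Selmer groups via Proposition \ref{Rrhobarfinite}, enlarge $S_i$ to the common set $T=S_1\cup S_2$ where $\mathcal{R}^T$ depends only on the residual module, and control the passage from $S_i$ to $T$ via unramified local cohomology at $T\setminus S_i$). The one genuine refinement in your argument is worth noting: you show that $H^1_{\op{nr}}(\Kan_\eta,E_i[p])$ actually \emph{vanishes} for $v\in T\setminus S_i$ (because $\Kan_\eta/K_v$ is the unramified $\Z_p$-extension, so the residual Galois group $\op{G}_\eta/\op{I}_\eta\simeq\prod_{q\neq p}\Z_q$ is pro-prime-to-$p$, and $E_i[p]$ is unramified there), which upgrades the paper's left-exact comparison sequence to an honest isomorphism $\mathcal{R}^{S_i}(E_i[p]/\Kan)\simeq\mathcal{R}^T(E_i[p]/\Kan)$; the paper instead only asserts finiteness of $H^1_{\op{nr}}$ via the identification $\op{G}_\eta/\op{I}_\eta\simeq\hat\Z$, which is not literally correct once $\Kan_\eta/K_v$ is infinite, though its finiteness conclusion of course still suffices. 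So your version is a slight sharpening, not a new route, and the conclusion is the same.
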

The key result used in the proof of the above theorem will be Proposition $\ref{Rrhobarfinite}$, which states that \[\mathcal{R}_{p^{\infty}}(E_i/\Kan) \text{ is }\Lambda\text{-cotorsion with }\mu^{\op{fine}}(E_i/\Kan)=0\Leftrightarrow \mathcal{R}(E_i[p]/\Kan)\text{ is finite.}\]
In view of Lemma \ref{lemma27}, it suffices to prove the following result.
\begin{Proposition}\label{lastprop}
Let $E_1$ and $E_2$ be elliptic curves satisfying the conditions of Theorem $\ref{theorem3}$.
Then $\mathcal{R}(E_1[p]/\Kan)$ is finite if and only if $\mathcal{R}(E_2[p]/\Kan)$ is finite.
\end{Proposition}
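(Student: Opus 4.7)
The plan is to enlarge the set of primes defining the fine Selmer groups so that the isomorphism $E_1[p] \simeq E_2[p]$ of $\op{G}_K$-modules can be applied directly. First I would set $T := S_1 \cup S_2$, the set of primes of $K$ dividing $\mathcal{N}_1 \mathcal{N}_2 p$. Since $E_1[p]$ and $E_2[p]$ agree as $\op{G}_{K,T}$-modules, and since the auxiliary fine Selmer group $\mathcal{R}^T(E_i[p]/\Kan)$ of Definition \ref{Tfine} is built purely from this Galois module together with uniform local conditions at every $v \in T$, one obtains a canonical identification
\[\mathcal{R}^T(E_1[p]/\Kan) \simeq \mathcal{R}^T(E_2[p]/\Kan).\]
It therefore suffices to prove, for each $i\in\{1,2\}$, that $\mathcal{R}(E_i[p]/\Kan) = \mathcal{R}^{S_i}(E_i[p]/\Kan)$ is finite if and only if $\mathcal{R}^T(E_i[p]/\Kan)$ is finite.

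To establish this equivalence, I would introduce an intermediate subgroup $\widetilde{\mathcal{R}}^T(E_i[p]/\Kan) \subseteq H^1(K_T/\Kan, E_i[p])$ consisting of cohomology classes that vanish locally at every prime in $S_i$ and are unramified at every prime in $T \setminus S_i$. Since $K_{S_i}$ is the maximal subextension of $K_T$ that is unramified at all primes of $T \setminus S_i$, inflation together with the commutativity of localization yields a canonical isomorphism $\mathcal{R}^{S_i}(E_i[p]/\Kan) \xrightarrow{\sim} \widetilde{\mathcal{R}}^T(E_i[p]/\Kan)$. Directly from the definitions, $\mathcal{R}^T(E_i[p]/\Kan) \subseteq \widetilde{\mathcal{R}}^T(E_i[p]/\Kan)$, and the quotient injects into
\[\bigoplus_{v \in T \setminus S_i}\,\bigoplus_{\eta \mid v} H^1_{\op{nr}}(\Kan_\eta, E_i[p]).\]

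The crucial remaining task is to verify that this direct sum is finite. Any $v \in T \setminus S_i$ lies in $S_{3-i}$ and satisfies $v \nmid p$, so by the Heegner hypothesis on $E_{3-i}$ the rational prime beneath $v$ splits in $K$; Fact \ref{fact1} then yields only finitely many primes $\eta \mid v$ in $\Kan$. Moreover, $v \nmid \mathcal{N}_i$ ensures that $E_i$ has good reduction at $v$, so inertia acts trivially on $E_i[p]$ and $H^1_{\op{nr}}(\Kan_\eta, E_i[p]) \simeq E_i[p]/(\op{Frob}_\eta - 1)E_i[p]$ is finite. Chaining the resulting equivalences for $i=1$ and $i=2$ with the identification $\mathcal{R}^T(E_1[p]/\Kan) \simeq \mathcal{R}^T(E_2[p]/\Kan)$ will finish the proof. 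The main obstacle is the careful bookkeeping between the three distinct fine Selmer groups $\mathcal{R}^{S_i}$, $\widetilde{\mathcal{R}}^T$ and $\mathcal{R}^T$, and verifying that inflation from $H^1(K_{S_i}/\Kan, E_i[p])$ cuts out exactly the classes unramified at $T \setminus S_i$; this is standard but must be organized with the local triviality conditions at $S_i$ alongside.
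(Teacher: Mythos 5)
Your argument is essentially identical to the paper's: enlarge to $T=S_1\cup S_2$, use $E_1[p]\simeq E_2[p]$ to identify $\mathcal{R}^T(E_1[p]/\Kan)\simeq\mathcal{R}^T(E_2[p]/\Kan)$, and then show that $\mathcal{R}^{S_i}/\mathcal{R}^T$ injects into a finite sum of unramified local cohomology groups at the finitely many primes of $\Kan$ above $T\setminus S_i$ (finiteness via the Heegner hypothesis together with Fact~\ref{fact1}, plus $H^1_{\op{nr}}(\Kan_\eta,\cdot)\simeq H_0(\op{G}_\eta/\op{I}_\eta,\cdot)$). The intermediate group $\widetilde{\mathcal{R}}^T$ that you introduce is exactly the device implicit in the paper's exact sequence, and your good-reduction observation is a special case of the paper's use of \cite[Proposition 1.7.7]{NSW08}.
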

\begin{proof}
Let $\mathcal{N}_i$ be the conductor of $E_i$ and $S_i$ be the set of primes dividing $\mathcal{N}_i p$.
Denote by $T$ the union $S_1\cup S_2$.
Since $E_1[p]$ and $E_2[p]$ are isomorphic as Galois modules, the mod-$p$ fine Selmer groups are isomorphic, i.e.
\[\mathcal{R}^T(E_1[p]/\Kan) \simeq \mathcal{R}^T(E_2[p]/\Kan)\] (recall $\mathcal{R}^T(E_i[p]/\Kan)$ from Definition $\ref{Tfine}$).
Let $v\in T$ and $\eta\in v(\Kan)$.
Denote by $\op{G}_{\eta}$ the absolute Galois group of $\Kan_{\eta}$ and by $\op{I}_{\eta}\subset \op{G}_{\eta}$ its inertia subgroup.
Set $H^1_{\op{nr}}(\Kan_{\eta}, E_i[p])$ to denote the kernel of the restriction map,
\[\op{res}: H^1(\Kan_{\eta}, E_i[p])\longrightarrow H^1(\op{I}_{\eta}, E_i[p])^{\op{G}_{\eta}/\op{I}_{\eta}}.\]
It follows from inflation-restriction that $H^1_{\op{nr}}(\Kan_{\eta}, E_i[p])$ can be identified with $H^1(\op{G}_{\eta}/\op{I}_{\eta}, E_i[p]^{\op{I}_{\eta}})$. 
Since $\op{G}_{\eta}/\op{I}_{\eta}\simeq \hat{\Z}$, it follows that (see \cite[Proposition 1.7.7]{NSW08})
\[H^1(\op{G}_{\eta}/\op{I}_{\eta}, E_i[p]^{\op{I}_{\eta}})\simeq H_0(\op{G}_{\eta}/\op{I}_{\eta}, E_i[p]^{\op{I}_{\eta}})\] and hence is finite.
By the Heegner hypothesis and Fact $\ref{fact1}$, the set $v(\Kan)$ is finite for $v\in T$.
Let $\mathcal{H}_{v}^{\op{nr}}(\Kan, E[p])\subseteq \mathcal{H}_{v}(\Kan, E[p])$ denote the sum,
\[\mathcal{H}_{v}^{\op{nr}}(\Kan, E_i[p]):=\bigoplus_{\eta\in v(\Kan)}H^1_{\op{nr}}(\Kan_{\eta}, E_i[p]).\]
It follows from the above discussion that $\mathcal{H}_{v}^{\op{nr}}(\Kan, E_i[p])$ is finite for $v\in T$.

There is a natural exact sequence,
\[0\rightarrow \mathcal{R}^T(E_i[p]/\Kan)\rightarrow \mathcal{R}(E_i[p]/\Kan)\rightarrow \bigoplus_{v\in T\setminus S_i} \mathcal{H}_{v}^{\op{nr}}(\Kan, E_i[p]).\]
Since the last term in the above sequence is finite, it follows that $\mathcal{R}(E_i[p]/\Kan)$ is finite if and only if $\mathcal{R}^T(E_i[p]/\Kan)$ is.
The result follows from this.
\end{proof}
\begin{proof}[Proof of Theorem $\ref{theorem3}$]
Note that the Heegner hypothesis implies Hypothesis $\ref{splithyp}$. Therefore, Proposition $\ref{Rrhobarfinite}$ applies in this setting and asserts that \[\mathcal{R}_{p^{\infty}}(E_i/\Kan) \text{ is }\Lambda\text{-cotorsion with }\mu^{\op{fine}}(E_i/\Kan)=0\Leftrightarrow \mathcal{R}(E_i[p]/\Kan)\text{ is finite.}\] On the other hand, Proposition $\ref{lastprop}$ states that
\[\mathcal{R}(E_1[p]/\Kan)\text{ is finite }\Leftrightarrow \mathcal{R}(E_2[p]/\Kan)\text{ is finite.}\] The result follows.
\end{proof}

\section{Examples}
\label{examples}
In this section, we list some examples to illustrate the results in the article.
\begin{Example}\label{example1}We begin with an example which illustrates Theorem $\ref{theorem1}$.
In this example, $p=3$ and $E$ is the elliptic curve 81.1-CMa1 over $K = \Q(\sqrt{-3})$.
This is the elliptic curve with Weierstass equation $y^2+y=x^3$ base changed to $K$.
It has complex multiplication and its endomorphism ring is $\Z[\frac{1+\sqrt{-3}}{2}]$.
The group $E(K)[3]$ is the full group $E[3]$ (and isomorphic to $\Z/3 \times \Z/3$).
In particular, residual representation $\closure{\rho}_E$ is the trivial representation $\mtx{1}{}{}{1}$.
Therefore, $\closure{\rho}_E$ is split, $\varphi_1=\varphi_2=1$, and $Y_1$ and $Y_2$ are both the $3$-Hilbert class field extension of $\Kan$.
The prime $p=3$ is a prime of additive reduction and 3 is ramified in $K$.
The conductor of $E$ is $\mathcal{N}=9\mathcal{O}_K$ and in particular is not divisible by any primes $v\nmid 3$.
It follows that $\Sigma=\emptyset$ and Hypothesis $\ref{splithyp}$ is satisfied.
This unique prime above $(3)$ is totally ramified in the anticyclotomic $\Z_3$-extension of $K$.
Since the class number of $K$ is $1$, we know that $\mu(\Kan/K)=0$ \cite[Proposition 4.2]{Och09}.
It follows from Theorem $\ref{theorem1}$ that $\mathcal{R}_{3^{\infty}}(E/\Kan)$ is a cotorsion $\Lambda$-module with  $\mu^{\op{fine}}(E/\Kan)=0$ (at $p=3$).
In other words, $\mathcal{R}_{3^{\infty}}(E/\Kan)$ is a cofinitely generated $\Z_3$-module.
\end{Example}

\begin{Example}\label{example2} This example demonstrates Corollary $\ref{corollory33}$.
Let $K=\Q(\sqrt{-10})$, $p=5$, and $E$ be the elliptic curve $11a1$ over $\Q$ defined by the Weierstrass equation $y^2+y=x^3-x^2-10x-20$.
The $5$-torsion group $E(\Q)[5]$, is equal to $\Z/5\Z$.
In particular, $E(K)[5]\neq 0$.
Assume that $\mathcal{R}_{5^{\infty}}(E/\Kan)$ is a cotorsion $\Lambda$-module.
The set $S$ consists of the primes above $11$ in $K$.
Since $11$ splits in $K$, it follows that condition $\eqref{corollory33c3}$ of Corollary $\ref{corollory33}$ is satisfied.
Corollary $\ref{corollory33}$ asserts that if the $\mu$-invariant of the fine Selmer group $\mathcal{R}_{5^{\infty}}(E/\Kan)$ is zero, then the Iwasawa $\mu$-invariant of the $5$-Hilbert class field extension of $\Kan/K$ is zero as well.
\end{Example}

\begin{Example}\label{example3} 
This example demonstrates an application of Theorem $\ref{theorem3}$.
Consider the elliptic curves $E_1 = 201c1$ and $E_2 = 469a1$. 
Both elliptic curves have rank $1$ with good ordinary reduction at the prime $p=5$.
Further, there is an isomorphism of $\op{G}_{\Q}$-modules, $E_1[5] \simeq E_2[5]$.
Note that $E_1$ has bad reduction at the primes $3$, $67$ and $E_2$ has bad reduction at the primes $7$, $67$.
Set $M$ to be the product of primes of bad reduction $3\cdot 7 \cdot 67$.
The primes of bad reduction of $E_1, E_2$ split in infinitely many imaginary quadratic number field $K$ of the form $\Q(\sqrt{-Mk + 1})$ where $k\in \Z_{\geq 1}$, i.e. the Heegner hypothesis is satisfied.
Theorem $\ref{theorem3}$ applies; hence
\[\begin{split}& \mathcal{R}_{p^{\infty}}(E_1/\Kan) \text{ is }\Lambda\text{-cotorsion with }\mu^{\op{fine}}(E_1/\Kan)=0\\ \Leftrightarrow & \mathcal{R}_{p^{\infty}}(E_2/\Kan) \text{ is }\Lambda\text{-cotorsion with }\mu^{\op{fine}}(E_2/\Kan)=0.\end{split}\]
\end{Example}

\begin{Example}\label{example4} 
Through this final example we demonstrate that Theorem $\ref{theorem3}$ can be used to relate $\mu$-invariants of isogenous elliptic curves over $\Kan$.
Consider the isogenous elliptic curves $E_1 = 17a1$ and $E_2 = 17a2$. 
Both elliptic curves have rank $0$ and good supersingular reduction at the prime $p=11$. Further, there is an isomorphism of $\op{G}_{\Q}$-modules, $E_1[11] \simeq E_2[11]$.
Consider the imaginary quadratic number field, $K= \Q(\sqrt{-8})$.
Note that the prime 17 splits in $K$, i.e. both $E_1$ and $E_2$ satisfy the Heegner hypothesis.
It is known that $\mu^{\op{fine}}(E_1/\Kan)=0$ (see \cite[Table in \S 4]{Mat18}). It follows from Theorem \ref{theorem3} that $\mu^{\op{fine}}(E_2/\Kan)=0$, as well.
\end{Example}

\section*{Acknowledgement}
The first named author is funded by the PIMS Postdoctoral Fellowship. 
\bibliographystyle{abbrv}
\bibliography{references}

\begin{thebibliography}{10}

\bibitem{Ari14}
C.~S. Aribam.
\newblock On the $\mu$-invariant of fine {Selmer} groups.
\newblock {\em J. Number Theory}, 135:284--300, 2014.

\bibitem{BD05}
M.~Bertolini and H.~Darmon.
\newblock Iwasawa's main conjecture for elliptic curves over anticyclotomic
  $\mathbb{Z}_p$-extensions.
\newblock {\em Ann. Math.}, pages 1--64, 2005.

\bibitem{Bri07}
D.~Brink.
\newblock Prime decomposition in the anti-cyclotomic extension.
\newblock {\em Math. Comp.}, 76(260):2127--2138, 2007.

\bibitem{CGLS20}
F.~Castella, G.~Grossi, J.~Lee, and C.~Skinner.
\newblock On the anticyclotomic iwasawa theory of rational elliptic curves at
  eisenstein primes.
\newblock {\em arXiv preprint arXiv:2008.02571}, 2020.

\bibitem{CS05}
J.~Coates and R.~Sujatha.
\newblock Fine {Selmer} groups of elliptic curves over $p$-adic {Lie}
  extensions.
\newblock {\em Math. Ann.}, 331(4):809--839, 2005.

\bibitem{Dri02}
M.~J. Drinen.
\newblock Finite submodules and {Iwasawa} $\mu$-invariants.
\newblock {\em J. Number Theory}, 93(1):1--22, 2002.

\bibitem{Dri03}
M.~J. Drinen.
\newblock Iwasawa $\mu$-invariants of elliptic curves and their symmetric
  powers.
\newblock {\em J. Number Theory}, 102(2):191--213, 2003.

\bibitem{EPW}
M.~Emerton, R.~Pollack, and T.~Weston.
\newblock Variation of {I}wasawa invariants in {Hida} families.
\newblock {\em Invent. Math.}, 163(3):523--580, 2006.

\bibitem{FW79}
B.~Ferrero and L.~C. Washington.
\newblock The {Iwasawa} invariant $\mu_p$ vanishes for abelian number fields.
\newblock {\em Ann. Math.}, pages 377--395, 1979.

\bibitem{Fin06}
T.~Finis.
\newblock The $\mu$-invariant of anticyclotomic ${L}$-functions of imaginary
  quadratic fields.
\newblock {\em Journal f{\"u}r die reine und angewandte Mathematik},
  2006(596):131--152, 2006.

\bibitem{Gre89}
R.~Greenberg.
\newblock Iwasawa theory for $p$-adic representations.
\newblock In {\em Algebraic Number Theory—in Honor of K. Iwasawa}, pages
  97--137. Mathematical Society of Japan, 1989.

\bibitem{Gre99}
R.~Greenberg.
\newblock Iwasawa theory for elliptic curves.
\newblock In {\em Arithmetic theory of elliptic curves (Cetraro, 1997)}, volume
  1716, pages 51--144. Springer, 1999.

\bibitem{Gre11}
R.~Greenberg.
\newblock {\em Iwasawa theory, projective modules, and modular
  representations}.
\newblock American Math. Soc., 2010.

\bibitem{GV00}
R.~Greenberg and V.~Vatsal.
\newblock On the {Iwasawa} invariants of elliptic curves.
\newblock {\em Invent. Math.}, 142(1):17--63, 2000.

\bibitem{HM99}
Y.~Hachimori and K.~Matsuno.
\newblock An analogue of {Kida's} formula for the {Selmer} groups of elliptic
  curves.
\newblock {\em J. Alg. Geom.}, 8:581--601, 1999.

\bibitem{Hid10}
H.~Hida.
\newblock The {I}wasawa $\mu$-invariant of $p$-adic {H}ecke ${L}$-functions.
\newblock {\em Annals of mathematics}, pages 41--137, 2010.

\bibitem{Iwa73}
K.~Iwasawa.
\newblock On the $\mu$-invariants of $\mathbb{Z}_\ell$-extensions, number
  theory.
\newblock {\em Algebraic Geometry and Commutative Algebra}, pages 1--11, 1973.

\bibitem{JS11}
S.~Jha and R.~Sujatha.
\newblock On the {Hida} deformations of fine {Selmer} groups.
\newblock {\em J. Algebra}, 338(1):180--196, 2011.

\bibitem{Kat04}
K.~Kato.
\newblock $p$-adic {Hodge} theory and values of zeta functions of modular
  forms.
\newblock {\em Ast{\'e}risque}, 295:117--290, 2004.

\bibitem{Kun20_infinite}
D.~Kundu.
\newblock Growth of fine {Selmer} groups in infinite towers.
\newblock {\em Canad. Math. Bull.}, 63(4):921--936, 2020.

\bibitem{Kun20_p}
D.~Kundu.
\newblock Growth of $p$-fine {Selmer} groups and $p$-fine {Shafarevich-Tate}
  groups in {$\mathbb{Z}/p\mathbb{Z}$} extensions.
\newblock {\em J. Ramanujan Math. Soc}, 2020.

\bibitem{Kun20_uniform}
D.~Kundu.
\newblock Growth of {Selmer} groups and fine {Selmer} groups in uniform pro-$p$
  extensions.
\newblock {\em Annales math{\'e}matiques du Qu{\'e}bec}, pages 1--16, 2020.

\bibitem{LM15}
M.~F. Lim and V.~K. Murty.
\newblock The growth of fine {Selmer} groups.
\newblock {\em J. Ramanujan Math. Society}, 31(1):79--94, 2016.

\bibitem{Mat18}
A.~Matar.
\newblock Fine {S}elmer groups, {H}eegner points and anticyclotomic
  $\mathbb{Z}_{p}$-extensions.
\newblock {\em Int. J. Number Theory}, 14(05):1279--1304, 2018.

\bibitem{Maz72}
B.~Mazur.
\newblock Rational points of abelian varieties with values in towers of number
  fields.
\newblock {\em Invent. Math.}, 18(3-4):183--266, 1972.

\bibitem{NSW08}
J.~Neukirch, A.~Schmidt, and K.~Wingberg.
\newblock {\em Cohomology of number fields}, volume 323.
\newblock Springer, 2013.

\bibitem{Och09}
Y.~Ochi.
\newblock A remark on the pseudo-nullity conjecture for fine {S}elmer groups of
  elliptic curves.
\newblock {\em Rikkyo Daigaku sugaku zasshi}, 58(1):1--7, 2009.

\bibitem{PR93}
B.~Perrin-Riou.
\newblock Fonctions ${L}$ $p$-adiques d’une courbe elliptique et points
  rationnels.
\newblock {\em Annales de l'institut Fourier}, 43(4):945--995, 1993.

\bibitem{PR95}
B.~Perrin-Riou.
\newblock Fonctions ${L}$ p-adiques des repr{\'e}sentations $p$-adiques.
\newblock {\em Ast{\'e}risque}, (229), 1995.

\bibitem{PW11}
R.~Pollack and T.~Weston.
\newblock On anticyclotomic $\mu$-invariants of modular forms.
\newblock {\em Comp. Math.}, 147(5):1353--1381, 2011.

\bibitem{Rub91}
K.~Rubin.
\newblock The ``main conjectures” of {Iwasawa} theory for imaginary quadratic
  fields.
\newblock {\em Invent. Math.}, 103(1):25--68, 1991.

\bibitem{Rub14}
K.~Rubin.
\newblock {\em Euler Systems.(AM-147)}, volume 147.
\newblock Princeton University Press, 2014.

\bibitem{Vat03}
V.~Vatsal.
\newblock Special values of anticyclotomic ${L}$-functions.
\newblock {\em Duke Math.}, 116(2):219--261, 2003.

\bibitem{Wut07}
C.~Wuthrich.
\newblock The fine {Tate}--{Shafarevich} group.
\newblock In {\em Math. Proc. Camb. Philos. Soc.}, volume 142, pages 1--12.
  Cambridge University Press, 2007.

\bibitem{Wut05}
C.~Wuthrich.
\newblock Iwasawa theory of the fine {Selmer} group.
\newblock {\em J. Alg. Geom.}, 16(1):83--108, 2007.

\end{thebibliography}
\end{document}